\newtheorem{thm}{Theorem}[section]
\newtheorem{lemma}[thm]{Lemma}
\newtheorem{prop}[thm]{Proposition}
\newtheorem{cor}[thm]{Corollary}
\newcommand{\Z}{\mathbb{Z}}
\newcommand{\eq}{\operatorname{eq}}
\newcommand{\dsp}{\displaystyle}
\newcommand{\rb}{\operatorname{rb}}
\title{Rainbow Numbers of $\Z_{n}$ for $a_1x_1+a_2x_2+a_3x_3 =b$}
\author{Katie Ansaldi\thanks{Wabash College, ansaldik@wabash.edu}, Houssein El Turkey\thanks{University of New Haven, helturkey@newhaven.edu}, Jessica Hamm\thanks{Winthrop University, 	hammj@winthrop.edu}, Anisah Nu'Man\thanks{Spelman College, anisah.numan@spelman.edu},\\ Nathan Warnberg\thanks{University of Wisconsin-La Crosse, nwarnberg@uwlax.edu}, Michael Young\thanks{Iowa State University, myoung@iastate.edu}}
\date{\today}
\begin{document}
	\maketitle
	
	\begin{abstract}
		An exact $r$-coloring of a set $S$ is a surjective function $c:S\to [r]$.  The rainbow number of a set $S$ for equation $eq$ is the smallest integer $r$ such that every exact $r$-coloring of $S$ contains a rainbow solution to $eq$. In this paper, the rainbow number of $\Z_p$, for $p$ prime and the equation $a_1x_1 + a_2x_2 + a_3x_3 = b$ is determined. The rainbow number of  $\Z_{n}$, for a natural number $n$, is determined under certain conditions. 
	\end{abstract}

	\section{Introduction}
	
	Let $c$ be a coloring of set $S$.  A subset $X\subseteq S$ is rainbow if each element of $X$ is colored distinctly.  For example, color $[n] = \{1,2,\dots,n\}$ and consider solutions to the equation $x_1 + x_2 = x_3$.  If each element of a solution $\{a,b,a+b\}\subseteq [n]$ is colored distinctly, that solution is rainbow. One of the first papers to investigate rainbow arithmetic progressions is \cite{J}, where Jungi\'c et al. showed that colorings with each color used equally yield rainbow arithmetic progressions.  In \cite{J}, only $3$-term arithmetic progressions are considered which are also solutions to $x_1+x_2 = 2x_3$. In \cite{AF}, Axenovich and Fon-Der-Flaass showed that no $5$-colorings avoid rainbow $3$-term arithmetic progressions. A few articles investigated the anti-van der Waerden number, which is the fewest number of colors need to guarantee a rainbow arithmetic progression. For example, Butler et al. established, in \cite{DMS}, bounds for the anti-van der Waerden number when coloring $[n]$ and some exact values when coloring $\Z_n$. Later, Berikkyzy, Schulte, and Young determined, in \cite{BSY}, the anti-van der Waerden number for $[n]$ in the case of $3$-term arithmetic progressions. 
	
	Some of this work was generalized to graphs and abelian groups. Montejano and Serra investigated, in \cite{MS}, rainbow-free colorings of abelian groups when considering arithmetic progressions. Similarly, rainbow arithmetic progressions in finite abelian groups were studied by co-author Young, in \cite{finabgroup}, where the anti-van der Waerden numbers were connected to the order of the group. When arithmetic progressions were extended to graphs,  Rehm, Schulte, and Warnberg showed, in \cite{RSW}, the anti-van der Waerden numbers on graph products is either $3$ or $4$.
	
	Generalizing the equation $x_1 + x_2 = 2x_3$, Bevilacqua et al., in \cite{BKKTTY}, considered $x_1 + x_2 = kx_3$ on $\Z_n$. The rainbow number of $\Z_n$ was determined for these equations when $k = 1$ or $k = p$ where $p$ is prime.  These results served as motivation for this paper where the equation $a_1x_1 + a_2x_2+a_3x_3 = b$ will be considered over $\Z_{p}$ and $\Z_{n}$ with $p$ prime. From now on $a_1x_1 + a_2x_2+a_3x_3 = b$ will be denoted by $\eq(a_1,a_2,a_3,b)$.  This paper establishes the rainbow number, also known as the anti-van der Waerden number, of $\Z_{n}$ for $\eq(a_1,a_2,a_3,b)$ for some equations.  One important result that will be used is Huicochea and Montejano's characterization, in \cite{RFC}, of all rainbow-free exact $3$-colorings of $\Z_p$ for $\eq(a_1,a_2,a_3, b)$ for all primes $p$.

	\subsection{Preliminaries}\label{sec:prelims}    
	An \emph{$r$-coloring} of a set $S$ is a function $c:S \to [r]$ and an $r$-coloring is \emph{exact} if $c$ is surjective.  Note that an exact $r$-coloring yields a partition of $S$ into $r$ disjoint color classes.   This paper will focus on the linear equation $\eq(a_1,a_2,a_3,b)$ given by \begin{equation}\label{eq1} a_1x_1+ a_2x_2 + a_3x_3 = b\end{equation}
	
	\noindent and $r$-colorings of $\mathbb{Z}_n$.  An ordered set $(s_1,s_2,s_3)$ is called a \emph{solution} to $\eq(a_1,a_2,a_3,b)$ in $\Z_n$  if $a_1s_1 + a_2s_2+a_3s_3 \equiv b\bmod n$.  Throughout the paper $=$ will be used instead of $\equiv$, and the $\bmod$ $n$ will not be used unless the context requires clarification.
	
	If $c$ is an $r$-coloring of $\Z_n$, then a \emph{rainbow solution} in $\Z_n$ to $\eq(a_1,a_2,a_3,b)$ is a solution such that $|\{c(s_1), c(s_2),c(s_3)\}| = 3$, i.e. each member of the solution has been assigned a distinct color by $c$.  A coloring $c$ of $\Z_n$ is \emph{rainbow-free} for $\eq(a_1,a_2,a_3,b)$ if there are no rainbow solutions.
	
	The rainbow number of $\Z_n$ for equation $eq = \eq(a_1,a_2,a_3,b)$, denoted $\rb(\Z_n, eq)$, is the smallest positive integer $r$ such that every exact $r$-coloring of $\Z_n$ has a rainbow solution for $eq$.  If there are no rainbow solutions to $eq$ in an exact $n$-coloring of $\Z_n$, then the convention will be that $\rb( \Z_n, eq) = n + 1$.  Since rainbow solutions to $eq$ require three colors, then $\rb(\Z_n, eq) \ge 3$, for all $n\ge2$.
	
	The following tools will be used throughout the paper.  Given a set $S\subseteq \Z_n$ and $d,t\in \Z_n$, the sets $S+t = \{s+t\,|\, s\in S\}$ and $dS = \{ds\,|\,s\in S\}$ are called the \emph{$t$-translation} and \emph{$d$-dilation} of $S$, respectively. If the multiplicative inverse of $a\in \Z_n$ exists, denote the inverse by $a^{-1}$. The set of all these invertible elements forms a group under multiplication, and it is denoted by  $\Z_n^*$. For $d\in \Z_n^*$, let $\langle d \rangle$ be the multiplicative subgroup of $\Z_n^*$ generated by $d$ and $\langle d_1,\dots,d_k\rangle$ be multiplicatively generated by the $d_i$'s. A subset $S\subseteq \Z_n$ is \emph{$\langle d\rangle$-periodic} if $S=dS$ and a set is called \emph{symmetric} if it is $\langle -1 \rangle$-periodic. For ease of reading, the related results from \cite{RFC} are referenced below.

	\begin{thm}\label{thm3-HM}(\cite[Theorem~3]{RFC}). Let $A$, $B$ and $C$ be the color classes of an exact $3$-coloring of $\Z_p$ such that $1\leq |A| \leq |B| \leq |C|$.  The coloring is rainbow-free for $\eq(1,1,-c,0)$ if and only if, under dilation, one of the following holds true: 
		\begin{enumerate}
			\item[1)] $A=\{0\}$, with both $B$ and $C$ symmetric $\langle c \rangle$-periodic subsets.
			\item[2)] $A=\{1\}$ for 
			\begin{enumerate}
				\item[a)] $c=2$, with $(B-1)$ and $(C-1)$ symmetric $\langle 2 \rangle$-periodic subsets;
				\item[b)] $c=-1$, with $(B \backslash \{-2\}) +2^{-1}$ and $(C \backslash \{-2\})+2^{-1}$ symmetric subsets.
			\end{enumerate}
			\item[3)] $|A| \geq 2$, for $c=-1$, with $A, B$ and $C$ arithmetic progressions with difference 1, such that $A=\{i\}_{i=t_1}^{t_2 -1}$, $B=\{i\}_{i=t_2}^{t_3 -1}$, and $C=\{i\}_{i=t_3}^{t_1 -1}$, where $(t_1+t_2+t_3)=1$ or $2$. 
		\end{enumerate}
	\end{thm}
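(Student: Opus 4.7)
The plan is to prove both directions. For the ``if'' direction I would verify by direct substitution that each listed coloring has no rainbow solution. In Case~1, any rainbow solution must place $0$ at some coordinate; when $x_3 = 0$ the equation $x_1 + x_2 = c x_3$ forces $x_2 = -x_1$, so symmetry of $B$ and $C$ keeps the nonzero coordinates in a common class, and when $x_1 = 0$ the equation becomes $x_2 = c x_3$, so $\langle c\rangle$-periodicity of $B$ and $C$ does the same. I would reduce Case~2a to Case~1 via the translation $y_i = x_i - 1$, which turns $x_1+x_2=2x_3$ into $y_1+y_2=2y_3$ and shifts $A=\{1\}$ to $\{0\}$. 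Case~2b is analogous after the shift $z_i = x_i + 2^{-1}$ applied to $x_1+x_2+x_3=0$, with $-2$ serving as the unique exception since fixing one coordinate at $1$ and another at $-2$ forces the third to be $1 \in A$. Finally, Case~3 is rainbow-free because the three consecutive intervals have Minkowski sum $A+B+C$ equal to an interval of length $p-2$ in $\Z_p$, and $t_1 + t_2 + t_3 \in \{1,2\}$ is exactly the hypothesis that excludes $0$ from this interval.

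For the ``only if'' direction I would translate the rainbow-free condition into the sumset containments $c^{-1}(X+Y)\subseteq X\cup Y$ for each pair of distinct color classes $X,Y$ with third class $Z$. Combining this with the Cauchy--Davenport inequality $|X+Y|\ge \min(p, |X|+|Y|-1)$ and the upper bound $|X+Y|\le |X|+|Y|$ forces every pairwise sumset to be nearly minimal. I would then apply Vosper's theorem and the Freiman $3k-4$ theorem to rigidify each pair $(X,Y)$ into a pair of arithmetic progressions with matching common difference, outside of a short list of small exceptional cases. After normalizing by a single dilation to make the common difference equal to~$1$, the three pairwise structures should splice into a partition of $\Z_p$ into three consecutive arithmetic progressions, giving the form of Case~3.

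The hardest part will be the small cases $|A| = 1$, where Vosper's theorem gives only weak constraints on $A$ itself. I expect to handle these by exploiting the larger classes, whose joint structure still rigidifies under the sumset condition, and then classifying which single-point $A$ and coefficient $c$ are compatible with that rigid structure. Demanding that multiplication by $c$ preserve the $\langle c\rangle$-periodic or symmetric structure of $B$ and $C$ should isolate $c = 2$ and $c = -1$ in Case~2, and only $c = -1$ in Case~3, since negation is essentially the only nontrivial linear automorphism of $\Z_p$ that can stabilize a three-interval cyclic partition. Finally, the split between $A = \{0\}$ and $A = \{1\}$ for $|A| = 1$ will arise from the observation that dilation acts transitively on $\Z_p^*$, so any single nonzero value of $A$ is equivalent to $\{1\}$, and the distinction $0 \in A$ versus $0 \notin A$ is genuinely dilation-invariant.
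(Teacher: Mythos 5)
First, a point of context: the paper does not prove this statement at all --- it is Theorem~3 of Huicochea and Montejano, quoted verbatim from \cite{RFC} in the preliminaries ``for ease of reading,'' so there is no in-paper proof to compare against. Judged on its own terms, your sketch of the ``if'' direction is essentially correct and complete: the reductions of Case~2a to Case~1 by translation, the role of $-2$ as the forced exception in Case~2b, and the observation that $A+B+C$ is an interval of cardinality $p-2$ omitting $0$ exactly when $t_1+t_2+t_3\in\{1,2\}$ all check out. Your ``only if'' strategy for $|A|\geq 2$ --- passing to containments such as $c^{-1}(X+Y)\subseteq X\cup Y$, squeezing $|X+Y|$ between the Cauchy--Davenport bound and $|X|+|Y|$, and invoking Vosper and a $3k-4$-type theorem --- is indeed the approach taken in this literature (cf.\ Llano--Montejano \cite{LM} and \cite{RFC}), though even there one must separately dispose of the boundary cases where $|X|+|Y|$ is too close to $p$ for those inverse theorems to apply.

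The genuine gap is in your plan for $|A|=1$, and it is not merely that ``Vosper gives weak constraints on $A$.'' When $|A|=1$ the only nontrivial pair is $(B,C)$, with $|B|+|C|=p-1$; then $c^{-1}(B+C)\subseteq B\cup C$ only says $|B+C|\leq p-1$, while Cauchy--Davenport gives $|B+C|\geq p-2$, so no inverse theorem (Vosper, Freiman $3k-4$, Hamidoune--R{\o}dseth) applies and nothing ``rigidifies.'' Indeed it cannot: the extremal objects in Cases~1 and~2a are symmetric $\langle c\rangle$-periodic sets --- typically unions of cosets of the multiplicative subgroup $\langle c,-1\rangle$ --- which are nowhere near arithmetic progressions. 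So the premise of your fallback (``the larger classes still rigidify under the sumset condition'') is false, and the subsequent step, ``demanding that multiplication by $c$ preserve the $\langle c\rangle$-periodic or symmetric structure of $B$ and $C$,'' assumes the structure you are supposed to derive. The correct route for $A=\{s\}$ is elementary but different: solutions containing $s$ in each of the three coordinate positions force $B$ and $C$ to be invariant under several explicit affine maps (e.g.\ $x\mapsto -x+cs$, $x\mapsto cx-s$, $x\mapsto c^{-1}(x+s)$), and the real work is classifying the pairs $(s,c)$ for which the group generated by these maps admits a nontrivial invariant partition of $\Z_p\setminus\{s\}$ --- this is what isolates $s=0$ for general $c$ and $s\neq 0$ only for $c\in\{2,-1\}$. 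That analysis is absent from your outline.
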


	\begin{thm}\label{thm6-HM}(\cite[Theorem~6]{RFC}). Let $A$, $B$ and $C$ be the color classes of an exact $3$-coloring of $\Z_p$ such that $1\leq |A| \leq |B| \leq |C|$. The coloring is rainbow-free for $\eq(a_1,a_2,a_3,b)$, with some $a_i\neq a_j$,
		if and only if $A=\{s\}$ with $s(a_1+a_2+a_3)=b$, and both $B$ and $C$ are sets invariant under six specific transformations.
	\end{thm}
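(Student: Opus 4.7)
The plan is to adapt the strategy used for Theorem~\ref{thm3-HM}, exploiting the asymmetry between the coefficients to force the smallest color class to be a singleton. The key reformulation is: for each coordinate position $i\in\{1,2,3\}$ and each $s\in\Z_p$, fixing $x_i=s$ in \eqref{eq1} determines an affine bijection $\phi_i^s$ between the remaining two variables (assuming the relevant $a_j$ is invertible; degenerate cases where some $a_i$ fails to be a unit are handled separately). Rainbow-freeness is then equivalent to the statement that whenever $s$ lies in a color class, $\phi_i^s$ preserves the partition of $\Z_p\setminus\{s\}$ into the other two classes. The six transformations in the conclusion arise precisely as these maps $\phi_i^s$, taken over the three choices of $i$ together with the two orderings of the resulting pair.

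The first and main step is to prove $|A|=1$. Suppose for contradiction that $s\ne s'$ both lie in $A$. For each fixed position $i$, the maps $\phi_i^s$ and $\phi_i^{s'}$ differ by a nonzero translation (a scalar multiple of $s-s'$), and both preserve $B$ and $C$ away from a bounded set of excluded points. Composing them yields a nontrivial translation of $\Z_p$ that stabilizes $B$ and $C$ on most of $\Z_p$; since $\Z_p$ is a prime-order cyclic group, any nontrivial translation is transitive, which forces one of $B$ or $C$ to exhaust $\Z_p$, a contradiction. Combining this across the three positions while tracking which coefficients coincide---crucially using the assumption that $a_i\ne a_j$ for some pair---rules out $|A|\ge 2$. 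This case analysis, together with the bookkeeping of exceptional elements and the separate treatment of coefficients that are zero, is the main obstacle of the proof.

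Once $A=\{s\}$ is established, combining the bijectivity and color-preservation properties of the $\phi_i^s$ across the three positions $i$ pins down $s$ uniquely, yielding the identity $s(a_1+a_2+a_3)=b$. Invariance of $B\setminus\{s\}$ and $C\setminus\{s\}$ under the six transformations is then immediate from the reformulation in the first paragraph, since each solution $(x_1,x_2,x_3)$ with exactly one coordinate equal to $s$ forces the remaining two coordinates to share a color, and unwinding this condition over the three choices of singleton position (with the two orderings of the remaining pair) produces exactly the required six invariances.
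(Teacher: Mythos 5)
First, a point of orientation: Theorem~\ref{thm6-HM} is not proved in this paper at all --- it is quoted verbatim from Huicochea and Montejano \cite{RFC} as an imported tool, so there is no in-paper proof to compare your attempt against. Judged on its own terms, your outline has a genuine gap at its central step, the claim that $|A|=1$. You argue that if $s\ne s'$ both lie in $A$, then the composition $\phi_i^{s}\circ(\phi_i^{s'})^{-1}$ is a nontrivial translation that ``stabilizes $B$ and $C$ on most of $\Z_p$,'' and that transitivity of a nonzero translation then forces $B$ or $C$ to be all of $\Z_p$. That inference is false. A translation $\tau$ by $t\ne 0$ satisfying $\tau(B)\subseteq B\cup E$ with $E$ small does not force $B=\Z_p$; it only forces $B$ to be a union of at most $|E|$ arithmetic progressions with common difference $t$ (e.g.\ $B=\{0,t,\dots,kt\}$ has $\tau(B)\setminus B$ a single point). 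This is not a technicality: it is exactly the structure that actually occurs --- Theorem~\ref{thm3-HM}(3) and Theorem~\ref{thm5-HM}(2) exhibit rainbow-free colorings with $|A|\ge 2$ in which all three classes are arithmetic progressions. So the entire content of the ``$|A|=1$'' step is to show that when some $a_i\ne a_j$ the AP configurations cannot survive, which requires playing the six distinct dilation factors $d_1,\dots,d_6$ against one another (the source paper does this with Vosper/Freiman-type structure results for sets with small sumset); your sketch never engages with this. Note also that your ``bounded set of excluded points'' has size on the order of $|A|$, i.e.\ on the order of the quantity you are trying to bound, so even the weaker stability statement is not available a priori.

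The remaining steps are asserted rather than derived: the identity $s(a_1+a_2+a_3)=b$ does not follow ``immediately'' from bijectivity of the $\phi_i^s$ (if $a_1+a_2+a_3=0\ne b$ there is no such $s$ and one must instead produce a rainbow solution, cf.\ Corollary~\ref{cor8-HM}(1)), and the precise form of the six invariances --- which the theorem statement itself leaves unspecified --- would need to be written out as affine maps fixing $s$ with the dilation factors of Corollary~\ref{cor8-HM}. Your first and last paragraphs correctly identify where the six transformations come from, but as a proof the proposal is incomplete where the real difficulty lies.
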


	\begin{cor}\label{cor8-HM}(\cite[Corollary~8]{RFC}).
		Every exact $3$-coloring of $\Z_p$ contains a rainbow solution of\\ $\eq(a_1,a_2,a_3,b)$, with some $a_i \neq a_j$, if and only if one of the following holds true:
		\begin{itemize}
			\item[1)] $a_1+a_2+a_3=0 \not= b$,
			\item[2)] $|\langle d_1, \dots, d_6\rangle|=p-1$,
		\end{itemize}
		
		where $d_1 = -a_3a_1^{-1}$,
		$d_2 = -a_2a_1^{-1}$,
		$d_3 = -a_1a_2^{-1}$,
		$d_4 = -a_3a_2^{-1}$,
		$d_5 = -a_1a_3^{-1}$, and $d_6 = -a_2a_3^{-1}$.
	\end{cor}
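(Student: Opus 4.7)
My plan is to derive this corollary directly from Theorem~\ref{thm6-HM} by a contrapositive argument. Every exact $3$-coloring of $\Z_p$ contains a rainbow solution of $\eq(a_1,a_2,a_3,b)$ if and only if no rainbow-free exact $3$-coloring exists, which by Theorem~\ref{thm6-HM} reduces to showing that at least one of the two necessary ingredients must fail: either (i) there is no $s\in\Z_p$ with $s(a_1+a_2+a_3)=b$, or (ii) $\Z_p\setminus\{s\}$ cannot be partitioned into two nonempty sets $B,C$ each invariant under the six transformations. I would show that (i) fails precisely under condition~1) of the corollary, and (ii) fails precisely under condition~2).

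For (i), the linear equation $s(a_1+a_2+a_3)=b$ in the single unknown $s$ is unsolvable over the field $\Z_p$ exactly when $a_1+a_2+a_3=0$ but $b\neq 0$, which is condition~1). In all remaining cases at least one valid $s$ exists, and the analysis proceeds to (ii).

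The main step is to identify the six transformations in Theorem~\ref{thm6-HM} with the affine dilations $x\mapsto d_i(x-s)+s$ on $\Z_p\setminus\{s\}$. To see this I would substitute each of $x_1=s$, $x_2=s$, and $x_3=s$ in turn into the equation and use $s(a_1+a_2+a_3)=b$; for instance, setting $x_1=s$ yields $a_2(x_2-s)+a_3(x_3-s)=0$, which forces $x_3-s=d_6(x_2-s)$ and $x_2-s=d_4(x_3-s)$, and the other two substitutions produce the remaining four $d_i$-relations. Any rainbow-free coloring with $A=\{s\}$ must therefore have $B$ and $C$ closed under all six of these affine dilations.

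After translating by $-s$, the sets $B-s$ and $C-s$ form a nontrivial $2$-partition of $\Z_p^*$ into subsets invariant under multiplication by every $d_i$, i.e., into unions of cosets of $H:=\langle d_1,\dots,d_6\rangle$ in $\Z_p^*$. Such a partition exists if and only if $|H|<p-1$, so (ii) fails precisely when $|H|=p-1$, giving condition~2). For the ``only if'' direction, the same analysis also builds an explicit rainbow-free coloring whenever both (i) and (ii) hold, by taking $B=H+s$ and $C=(\Z_p^*\setminus H)+s$. I expect the one nontrivial point to be matching the abstract ``six specific transformations'' of Theorem~\ref{thm6-HM} with the explicit $d_i$-dilations; once this correspondence is nailed down, the rest is just an orbit count in the cyclic group $\Z_p^*$.
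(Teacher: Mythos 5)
This statement is quoted verbatim from \cite[Corollary~8]{RFC} and the paper offers no proof of its own, so there is no internal argument to compare against; what can be assessed is whether your reconstruction from Theorem~\ref{thm6-HM} is sound, and it is. The two failure modes you isolate are exactly right: the fixed point $s$ with $s(a_1+a_2+a_3)=b$ fails to exist precisely when $a_1+a_2+a_3=0\neq b$ (condition~1), and your substitutions $x_i=s$ correctly produce all six dilation factors (e.g.\ $x_1=s$ gives $a_2(x_2-s)+a_3(x_3-s)=0$, hence $d_4$ and $d_6$; the other two substitutions give $d_1,d_2,d_3,d_5$), and since a rainbow solution must use all three colors it must contain $s$, so rainbow-freeness with $A=\{s\}$ is equivalent to $B-s$ and $C-s$ being unions of cosets of $H=\langle d_1,\dots,d_6\rangle$ in $\Z_p^*$; a nontrivial two-block coset partition of $\Z_p^*$ exists iff $|H|<p-1$, giving condition~2. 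The one step you flag yourself---that the ``six specific transformations'' of Theorem~\ref{thm6-HM} are the affine dilations $x\mapsto d_i(x-s)+s$---cannot be verified from the paper's abbreviated statement of that theorem, but it is forced by the fact that Corollary~\ref{cor8-HM} is phrased in terms of exactly these $d_i$, and your derivation of them is the derivation in \cite{RFC}. So the proposal is a faithful and complete reconstruction of the cited source's argument rather than a new route.
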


	Note that Theorem \ref{thm5-HM} is the same as the case when $b=0$ and $c=-1$ in Theorem \ref{thm3-HM}.  It is included for completion.

	\begin{thm}\label{thm5-HM}\cite[Theorem~5]{RFC}.  Let $A$, $B$ and $C$ be the color classes of an exact $3$-coloring of $\Z_p$ with $p\ge 3$ and $1\leq |A| \leq |B| \leq |C|$.  The coloring is rainbow-free for $\eq(1,1,1,b)$ if and only if one of the following holds true: 
		\begin{itemize}
			\item[1)] $A=\{s\}$ with both $(B \backslash \{b-2s\}) +(s-b)2^{-1}$ and $(C \backslash \{b-2s\})+(s-b)2^{-1}$ symmetric sets.
			\item[2)] $|A| \geq 2$, and all $A, B$ and $C$ are arithmetic progressions with the same common difference $d$, so that $d^{-1}A=\{i\}_{i=t_1}^{t_2 -1}$, $d^{-1}B=\{i\}_{i=t_2}^{t_3 -1}$, and $d^{-1}C=\{i\}_{i=t_3}^{t_1 -1}$ satisfy $t_1+t_2+t_3 \in \{1+d^{-1}b, 2+d^{-1}b\}$.
		\end{itemize}
	\end{thm}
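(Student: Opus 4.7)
The plan is to reduce Theorem~\ref{thm5-HM} to the specialization $c=-1$ of Theorem~\ref{thm3-HM} via an affine change of variables that turns $\eq(1,1,1,b)$ into $\eq(1,1,1,0)$. Suppose first that $p\neq 3$, so that $3^{-1}\in\Z_p$ exists, and set $t=3^{-1}b$. For any exact $3$-coloring $c$ of $\Z_p$, define $c'(y)=c(y+t)$. Since $(x_1-t)+(x_2-t)+(x_3-t)=(x_1+x_2+x_3)-3t$, the triple $(x_1,x_2,x_3)$ solves $\eq(1,1,1,b)$ if and only if $(x_1-t,x_2-t,x_3-t)$ solves $\eq(1,1,1,0)$; combined with the fact that $c$ and $c'$ agree on the corresponding triples, this shows that $c$ is rainbow-free for $\eq(1,1,1,b)$ iff $c'$ is rainbow-free for $\eq(1,1,1,0)$. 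I then apply Theorem~\ref{thm3-HM} with $c=-1$ to $c'$.

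Translating the three cases of Theorem~\ref{thm3-HM} back by $+t$ (i.e.\ setting $A=A'+t$, etc.) recovers the two cases of Theorem~\ref{thm5-HM}. Case~1 of Theorem~\ref{thm3-HM} (with $A'=\{0\}$ and $B',C'$ symmetric) yields $A=\{t\}$ together with $B-t$ and $C-t$ symmetric; since $b-2t=t$ and $(t-b)2^{-1}=-t$, this is exactly Case~1 of Theorem~\ref{thm5-HM} with $s=t$. Case~2b (under dilation, $A'=\{d\}$ for any $d\neq 0$) produces every other singleton $A=\{s\}$ with $s=d+t\neq t$; writing $s=d+t$ and using $3t=b$, the condition $(B'\setminus\{-2d\})+2^{-1}d$ symmetric transforms into $(B\setminus\{b-2s\})+(s-b)2^{-1}$ symmetric, matching Case~1 of Theorem~\ref{thm5-HM} for $s\neq t$. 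Case~3 (under dilation, APs of common difference $d$ with $t_1+t_2+t_3\in\{1,2\}$) yields APs of common difference $d$ whose normalized endpoints shift by $d^{-1}t$; because $3d^{-1}t=d^{-1}b$, the sum condition becomes $t_1+t_2+t_3\in\{1+d^{-1}b,\,2+d^{-1}b\}$, which is precisely Case~2 of Theorem~\ref{thm5-HM}.

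The main obstacle is the algebraic bookkeeping in Case~2b, where the composition of dilation by $d$ with translation by $t$ must be carefully pushed through the formulas involving $-2$ and $2^{-1}$ to obtain the exact form stated in the theorem; the key identity used repeatedly is $3t=b$, which converts expressions in $d$ and $t$ into expressions in $s$ and $b$. The boundary case $p=3$, where $3^{-1}$ does not exist, is handled by direct inspection: the unique exact $3$-coloring of $\Z_3$ (up to relabeling) assigns distinct colors to $0,1,2$, whose sum is $0$, so a rainbow solution exists iff $b=0$, and one verifies that the conditions of Theorem~\ref{thm5-HM} correctly characterize this dichotomy.
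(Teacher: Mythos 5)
Your argument is correct, but it is not the paper's argument, because the paper does not prove this statement at all: Theorem~\ref{thm5-HM} is quoted verbatim from \cite[Theorem~5]{RFC}, and the authors only remark that its $b=0$ instance coincides with the $c=-1$ instance of Theorem~\ref{thm3-HM}. What you have done is supply the derivation that remark gestures at and extend it to all $b$: your translation $x\mapsto x-3^{-1}b$ is exactly the map $T(x)=x-ba^{-1}$ of the paper's Lemma~\ref{lem:J} specialized to $a_1=a_2=a_3=1$, and the bookkeeping you describe checks out. In Case~2b, with $A'=\{d\}$, $s=d+t$, $3t=b$, one has $-2d+t=b-2s$ and $(B'\setminus\{-2d\})+d2^{-1}=(B\setminus\{b-2s\})+(s-b)2^{-1}$ (using that symmetry is preserved under dilation by $d$), which is precisely clause~1) of Theorem~\ref{thm5-HM}; Case~1 of Theorem~\ref{thm3-HM} is the subcase $s=t$, where $b-2s=t\in A$ so the set difference is vacuous; and in Case~3 the endpoint sums shift by $3d^{-1}t=d^{-1}b$, giving clause~2). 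The $p=3$ check also works: the only solutions with three distinct entries have entry set $\{0,1,2\}$, which sums to $0$, and one verifies directly that clause~1) holds for the unique exact $3$-coloring of $\Z_3$ exactly when $b\ne 0$. The net effect of your approach is a self-contained reduction of the cited Theorem~\ref{thm5-HM} to the cited Theorem~\ref{thm3-HM}, which is a genuine (if modest) gain over the paper's bare citation; the one thing you still lean on externally is Theorem~\ref{thm3-HM} itself.
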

	
	Lemma \ref{lem:J} will be used to extend results for rainbow numbers of equations where $b=0$ to equations where $b\ne 0$.
	\begin{lemma}\label{lem:J}
		For $a_1,a_2,a_3\in \Z_n$ let $a=a_1+a_2+a_3$ and suppose that $a\in \Z_n^{\ast}$. There exists a rainbow-free $k$-coloring of $\Z_n$ for $a_1x_1+a_2x_2+a_3x_3=b$ if and only if there exists a rainbow-free $k$-coloring  of $\Z_n$ for  $a_1x_1+a_2x_2+a_3x_3=0$.
	\end{lemma}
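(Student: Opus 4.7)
The plan is to exhibit an explicit bijection between rainbow-free $k$-colorings for $\eq(a_1,a_2,a_3,b)$ and rainbow-free $k$-colorings for $\eq(a_1,a_2,a_3,0)$, given by a translation determined by $b$ and $a = a_1+a_2+a_3$. Since $a \in \Z_n^{\ast}$, the element $t := a^{-1}b$ is well-defined in $\Z_n$.

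First I would verify the key algebraic identity: for any $s_1,s_2,s_3 \in \Z_n$,
\[
a_1(s_1 - t) + a_2(s_2 - t) + a_3(s_3 - t) = a_1 s_1 + a_2 s_2 + a_3 s_3 - at = a_1 s_1 + a_2 s_2 + a_3 s_3 - b.
\]
Consequently $(s_1,s_2,s_3)$ is a solution to $\eq(a_1,a_2,a_3,b)$ if and only if $(s_1 - t, s_2 - t, s_3 - t)$ is a solution to $\eq(a_1,a_2,a_3,0)$. Thus the translation $\phi(x) = x - t$ is a bijection of $\Z_n$ that carries solution triples to solution triples, preserving rainbow-ness of a triple under any coloring that is transported along $\phi$.

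Next I would use this to convert colorings. Given a rainbow-free exact $k$-coloring $c$ of $\Z_n$ for $\eq(a_1,a_2,a_3,b)$, define $c'(x) = c(x+t)$. Since adding $t$ is a bijection on $\Z_n$, $c'$ is again an exact $k$-coloring. If $(s_1,s_2,s_3)$ were a rainbow solution of $\eq(a_1,a_2,a_3,0)$ under $c'$, then $(s_1+t,s_2+t,s_3+t)$ would be a solution of $\eq(a_1,a_2,a_3,b)$, and its color set under $c$ would coincide with $\{c'(s_1),c'(s_2),c'(s_3)\}$, contradicting rainbow-freeness of $c$. The reverse direction is symmetric: starting from a rainbow-free $c'$ for the $b=0$ equation, the coloring $c(x) = c'(x-t)$ is rainbow-free for $\eq(a_1,a_2,a_3,b)$.

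I do not anticipate a genuine obstacle: the only role of the invertibility hypothesis on $a$ is to guarantee that $t = a^{-1}b$ exists so that the translation lands exactly on the affine hyperplane of solutions for $b=0$. The mild bookkeeping to confirm is that $\phi$ preserves the exact number of color classes (immediate, as $\phi$ is a bijection) and that it maps ordered solution triples bijectively to ordered solution triples, which is exactly the identity computed above.
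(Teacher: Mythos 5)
Your proposal is correct and matches the paper's own argument: the paper defines the same translation $T(x)=x-ba^{-1}$ and uses the identical algebraic identity to set up a bijection between solution triples of the two equations. Your write-up is in fact slightly more explicit than the paper's, since you also spell out how the coloring is transported along the translation, a step the paper leaves implicit.
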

	\begin{proof} 
		Define $T: \Z_n \to \Z_n$ by $T(x)= x-ba^{-1}$. Suppose $(s_1,s_2,s_3)$ is a solution to $a_1x_1+a_2x_2+a_3x_3=b$. Applying the one-to-one transformation $T$ to $(s_1, s_2, s_3)$ gives:
		\begin{eqnarray*}
			a_1T(s_1)+a_2T(s_2)+a_3T(s_3) & = & a_1(s_1-ba^{-1}) + a_2(s_2-ba^{-1}) + a_3(s_3-ba^{-1}) \\
			& =& a_1s_1+a_2s_2+a_3s_3 + (a_1+a_2+a_3)(-ba^{-1}) \\
			&=& b + a(-ba^{-1}) \\
			&=& 0.
		\end{eqnarray*}
		Similarly, if $(T(s_1),T(s_2),T(s_3))$ is a solution to $a_1x_1+a_2x_2+a_3x_3=0$, then $(s_1,s_2,s_3)$ is a solution to $a_1x_1+a_2x_2+a_3x_3=b$. This gives a one-to-one correspondence between solutions of the two equations.
	\end{proof}
	
	This paper is organized as follows.  First, $\rb(\Z_p,\eq(a_1,a_2,a_3,b))$, is determined in Section~\ref{sec:zp}.  The main result in this section, Theorem~\ref{thm:rbzp}, states that the rainbow number of $\Z_p$ is either $3$ or $4$. In Section~\ref{sec:zn}, the rainbow number of $\Z_{n}$ is computed for a natural number $n$. The main result in this section, Theorem~\ref{thm:rbzn}, shows that for $n = p_1^{\alpha_1} p_2^{\alpha_2}\cdots p_\ell^{\alpha_\ell}$
	
	$$\rb(\Z_n,\eq(a_1,a_2,a_3,b)) =  2+ \dsp\sum_{k = 1}^\ell\left[\alpha_k(\rb(\Z_{p_k},\eq(a_1,a_2,a_3,b) - 2)\right],$$
	
	\noindent under certain conditions.  To prove this result, Section~\ref{sec:zn} includes leading lemmas and theorems such as finding the rainbow number $\rb(\Z_{2^{\alpha}},\eq(a_1,a_2,a_3,b))$ in Theorem~\ref{thm:23}, establishing the right hand side of the above equation as a lower bound in Corollary~\ref{thm:lowerboundzn-b}, and as an upper bound in Corollary~\ref{cor:upperboundzn}.

	\section{Rainbow Numbers of $\Z_p$}\label{sec:zp}
	
	This section establishes the rainbow number for the equation $\eq(a_1,a_2,a_3,b)$ over $\Z_p$ where $p$ is a prime.  Under certain conditions, Lemma \ref{lem-Anisah} establishes that if two elements in a solution are the same, then all three are the same.  This fact was mentioned in \cite{RFC} without proof and has been included for completion.
	\begin{lemma}\label{lem-Anisah}
		If $a_1s_1+a_2s_2+a_3s_3=0$ over $\Z_p$ with $|\{s_1,s_2,s_3\}| <3$, $a_1+a_2+a_3=0$ and $a_1a_2a_3\in \Z_p^{\ast}$, then $s_1=s_2 =s_3$.  
	\end{lemma}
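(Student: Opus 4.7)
The plan is to reduce the claim to a simple case analysis based on which two of the $s_i$'s are assumed equal. The hypothesis $|\{s_1,s_2,s_3\}|<3$ forces at least one pair $s_i=s_j$ with $i\neq j$, so I would handle the three cases $s_1=s_2$, $s_1=s_3$, and $s_2=s_3$ separately (though by symmetry the argument is essentially the same in all three).

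In the representative case $s_1=s_2$, I would substitute into the defining equation to get $(a_1+a_2)s_1+a_3s_3=0$. The hypothesis $a_1+a_2+a_3=0$ lets me rewrite $a_1+a_2=-a_3$, yielding $a_3(s_3-s_1)=0$ in $\Z_p$. Since $a_1a_2a_3\in\Z_p^{\ast}$ in particular forces $a_3$ to be a unit (equivalently, $\Z_p$ is a field and $a_3\neq 0$), this gives $s_3=s_1$, hence $s_1=s_2=s_3$. The cases $s_1=s_3$ and $s_2=s_3$ proceed identically, using invertibility of $a_2$ and $a_1$ respectively.

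There is no genuine obstacle here; the argument is a one-line manipulation in each case. The only thing to be careful about is making sure the hypothesis $a_1a_2a_3\in\Z_p^{\ast}$ is invoked correctly: it guarantees that each individual $a_i$ is nonzero (and therefore invertible, since $p$ is prime), which is exactly what is needed to cancel $a_k$ in the equation $a_k(s_k-s_i)=0$ obtained from the pairing. I would write the proof as a short WLOG argument covering one case in full and noting that the other two are symmetric.
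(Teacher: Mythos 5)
Your proof is correct and follows essentially the same route as the paper: reduce by symmetry to the case $s_1=s_2$, use $a_1+a_2+a_3=0$ to obtain $(a_1+a_2)(s_1-s_3)=a_3(s_3-s_1)=0$, and cancel using the fact that $\Z_p$ is a field and $a_3\neq 0$. The only cosmetic difference is that the paper argues $a_1+a_2\neq 0$ (since otherwise $a_3=0$) rather than substituting $a_1+a_2=-a_3$ up front, which is the same observation.
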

	\begin{proof} If $s_1 = s_2 = s_3$ the proof is complete.  Without loss of generality, assume $s_1 = s_2$.
		Observe $a_1+a_2+a_3=0$ implies $a_3=-a_1-a_2$, and therefore $a_1s_1+a_2s_1+a_3s_3=(a_1+a_2)(s_1-s_3)=0.$
		
		Since $\Z_p$ has no zero divisors this gives $a_1+a_2=0$ or $s_1-s_3=0$.  Note $a_1+a_2=0$ along with $a_1+a_2+a_3=0$ gives $a_3=0$ which contradicts $a_1a_2a_3\not=0$. Therefore, $s_1-s_3=0$ and so $s_1=s_3$ and $|\{s_1,s_2,s_3\}| = 1$.
	\end{proof}
	
	If $p=2$, by convention $\rb(\Z_2,\eq)=3$. The case when $p=3$ is handled next.

	\begin{prop}\label{z3}
		For all $a_1,a_2,a_3,b\in \Z_3$,
		\[
		\rb(\Z_3,\eq(a_1,a_2,a_3,b) )=\begin{cases}
		3 \quad \text{if $b=0$ and $a_i=a_j$, ~ for some $i \not=j$}\\
		\quad ~ \text{or $b \not=0$ and $a_i\not=a_j,$ ~ for some $i \ne j$,}\\
		4 \quad \text{otherwise}.
		\end{cases}
		\]
	\end{prop}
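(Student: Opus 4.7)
The plan is to reduce $\rb(\Z_3,\eq)$ to a finite check. Since $|\Z_3|=3$, every exact coloring of $\Z_3$ uses at most three colors, so by the convention in Section~\ref{sec:prelims} we have $\rb(\Z_3,\eq)\in\{3,4\}$. The exact $3$-coloring is unique up to permutation of colors and assigns distinct colors to $0,1,2$; hence $\rb(\Z_3,\eq)=3$ precisely when some triple $(s_1,s_2,s_3)$ solving $\eq(a_1,a_2,a_3,b)$ has $\{s_1,s_2,s_3\}=\{0,1,2\}$, and $\rb(\Z_3,\eq)=4$ otherwise.

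With this reduction I would split into four subcases according to whether $b=0$ and whether some $a_i=a_j$. Three of these reduce to short computations. If $b=0$ and (after relabeling) $a_1=a_2$, the triple $(1,2,0)$ gives $a_1\cdot 1+a_2\cdot 2+a_3\cdot 0=3a_1=0$, producing a rainbow solution. If $b\ne 0$ and $a_1=a_2=a_3=a$, any rainbow triple would force $a(0+1+2)=0\ne b$, so no rainbow solution exists. If $b=0$ and $\{a_1,a_2,a_3\}=\{0,1,2\}$, then after reindexing so that $a_1=0$ the equation collapses to $a_2s_2+a_3s_3=0$ with $\{a_2,a_3\}=\{1,2\}$, which forces $s_2=s_3$ in $\Z_3$ and contradicts distinctness; again no rainbow solution.

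The remaining subcase---$b\ne 0$ with some $a_i\ne a_j$---is the main step. Here the goal is to show the set $V=\{a_1s_1+a_2s_2+a_3s_3 : (s_1,s_2,s_3) \text{ is a permutation of }(0,1,2)\}$ contains both $1$ and $2$. The key observation is that dilating each $s_i$ by $2$ again yields a permutation of $(0,1,2)$ (since $2\cdot 0,\,2\cdot 1,\,2\cdot 2=0,\,2,\,1$ in $\Z_3$) and scales the sum by $2$, so $V$ is closed under multiplication by $2$. The orbits of $x\mapsto 2x$ on $\Z_3$ are $\{0\}$ and $\{1,2\}$, so $V$ is one of $\{0\}$, $\{1,2\}$, or $\{0,1,2\}$. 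To rule out $V=\{0\}$, comparing the values at $(0,1,2)$ and $(1,0,2)$ yields $a_1=a_2$, and comparing $(0,1,2)$ with $(0,2,1)$ yields $a_2=a_3$; together these contradict the hypothesis. Hence $\{1,2\}\subseteq V$, so rainbow solutions exist for every $b\in\{1,2\}$ and $\rb=3$.

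The only mild obstacle is this last subcase; the orbit-under-$2$-dilation trick sidesteps an enumeration over the patterns of $(a_1,a_2,a_3)$, though a direct six-permutation check would work equally well.
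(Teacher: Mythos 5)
Your proof is correct and follows essentially the same strategy as the paper: since the exact $3$-coloring of $\Z_3$ is unique up to renaming colors, $\rb(\Z_3,\eq)=3$ exactly when some permutation of $(0,1,2)$ solves the equation, and both arguments then reduce to a finite check on the patterns of $(a_1,a_2,a_3)$ and $b$. Your dilation-by-$2$ orbit argument in the subcase $b\ne 0$ with some $a_i\ne a_j$ is a nice touch that makes fully explicit the existence of a rainbow solution, a step the paper's proof only asserts (``it then follows that \dots'').
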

	\begin{proof}
		
		Note there is only one way (up to isomorphism) to color $\Z_3$ with three distinct colors. Suppose $\eq$ has a rainbow solution and, without loss of generality, assume a solution is $(1,2,0)$. For $\eq(a_1, a_2, a_3, 0)$, $a_1+2a_2=0$ implies $a_1=a_2$. It then follows that a rainbow solution will exist if and only if $a_i=a_j$ for some $i\ne j$, giving $\rb(\Z_3, \eq(a_1,a_2,a_3,0 )=3$. If the $a_i$'s are all distinct, by standard convention, $\rb(\Z_3,\eq(a_1,a_2,a_3,0))=4$. 
		
		Now consider $\eq(a_1, a_2, a_3, b)$ for $b\ne 0$. The solution $(1,2,0)$ gives $a_1-a_2=b $. Since $b\ne 0$, then $a_1 \ne a_2$. It then follows that $\rb(\Z_3,\eq(a_1,a_2,a_3,b))=3$ if $a_i\ne a_j$ for some $i\ne j$. Otherwise,  $\rb(\Z_3,\eq(a_1,a_2,a_3,b))=4$.
	\end{proof}
	
	Next, the case $p\geq 5$ will be discussed. Theorem \ref{thm:rbzp} shows that the rainbow number of $\eq(a_1,a_2,a_3,b)$ is either $3$ or $4$ depending on the different variations of $a_1, a_2, a_3$ and $b$.  The following theorem also uses notation established in Corollary \ref{cor8-HM}.
	
	\begin{thm}\label{thm:rbzp}
		Let $a_1,a_2,a_3,b\in \Z_p$ with some $a_i\ne a_j$ and $a_1a_2a_3\in \Z_p^{\ast}$ for $p\ge5$,
		then
		$$\rb(\Z_p,\eq(a_1,a_2,a_3,b)) = 
		\left
		\{\begin{array}{ll}
		3 & \text{if $|\langle d_1,d_2,\dots, d_6\rangle| = p-1$}\\
		& \text{or $a_1+a_2+a_3=0\neq b$,}\\
		4 &\text{otherwise.}
		\end{array}  \right.$$
	\end{thm}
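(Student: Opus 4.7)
The plan is to prove both bounds by splitting on the two disjunctive conditions in the statement. For the upper bound $\rb \leq 3$, Corollary~\ref{cor8-HM} says that every exact $3$-coloring of $\Z_p$ has a rainbow solution whenever $|\langle d_1,\dots,d_6\rangle| = p-1$ or $a_1+a_2+a_3 = 0 \neq b$; combined with the trivial lower bound $\rb \geq 3$, this settles those cases. In the ``otherwise'' case, the same corollary supplies a rainbow-free exact $3$-coloring, giving $\rb \geq 4$, and the substantive work is to prove $\rb \leq 4$.

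I will argue by contradiction. Suppose some rainbow-free exact $4$-coloring with classes $A_1,A_2,A_3,A_4$ exists. Every coarsening obtained by merging two classes is still rainbow-free, so by Theorem~\ref{thm6-HM} each of the six coarsenings has a singleton color class $\{s\}$ with $s(a_1+a_2+a_3) = b$. I then split on whether $a_1+a_2+a_3$ vanishes.

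If $a_1+a_2+a_3 \neq 0$, then $s^{\ast} := b(a_1+a_2+a_3)^{-1}$ is the unique element that can appear in such a singleton, and $s^{\ast}$ lies in exactly one class, say $A_m$. Any merge pairing $A_m$ with another class $A_j$ produces a $3$-coloring whose merged class has size $\geq 2$ and contains $s^{\ast}$, while the two remaining classes are disjoint from $A_m$ and hence neither equals $\{s^{\ast}\}$. Thus no class in the coarsening can serve as the required singleton $\{s^{\ast}\}$, contradicting Theorem~\ref{thm6-HM}.

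If $a_1+a_2+a_3 = 0$, the ``otherwise'' hypothesis forces $b=0$, making the identity $s(a_1+a_2+a_3) = b$ automatic. If two of the $A_i$ were non-singletons, then merging the other two would yield a $3$-coloring whose classes all have size $\geq 2$, contradicting Theorem~\ref{thm6-HM}; hence at least three $A_i$ are singletons, and with $p \geq 5$ exactly three. Because $\sum a_i = 0 = b$ makes the solution set translation-invariant, I translate so that one singleton is $\{0\}$ and denote the other two by $\{t_1\}$ and $\{t_2\}$ with $t_1 \neq t_2$ both nonzero. Setting $y := -(a_1 t_1 + a_2 t_2)/a_3$ and using $a_1+a_2+a_3 = 0$, a short check shows $y \notin \{t_1,t_2\}$ (else $t_1 = t_2$). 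So either $y = 0$ and $(t_1,t_2,0)$ is rainbow through the three singleton colors, or $y \in A_4$ and $(t_1,t_2,y)$ is rainbow; both contradict rainbow-freeness. The main obstacle is this last case, since the singleton element is not pinned down by the equation, and the contradiction must come from directly exhibiting a rainbow solution rather than from a structural clash among the merge conditions as in the previous case.
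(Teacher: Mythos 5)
Your proposal is correct and follows essentially the same route as the paper: Corollary~\ref{cor8-HM} handles the value-$3$ cases and the lower bound in the ``otherwise'' case, and the upper bound $\rb\le 4$ is obtained by merging color classes and invoking the singleton condition of Theorem~\ref{thm6-HM}, splitting on whether $a_1+a_2+a_3$ vanishes. The only cosmetic differences are that you phrase the argument by contradiction, insert an unnecessary translation in the three-singleton subcase, and re-derive inline the fact the paper isolates as Lemma~\ref{lem-Anisah}.
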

	
	\begin{proof} The proof follows by case analysis.  First, define $eq = \eq(a_1,a_2,a_3,b)$. \\
		\textbf{Case 1}: $|\langle d_1,d_2,\dots , d_6\rangle |=p-1$ or $a_1+a_2+a_3 = 0 \neq b$
		
		\noindent The conditions in this case are the conditions of Corollary \ref{cor8-HM}, thus $\rb(\Z_p,eq)\leq 3$ and $\rb(\Z_p, eq)=3$.
		
		\noindent\textbf{Case 2}: $|\langle d_1,d_2,\dots , d_6\rangle|<p-1$ and $a_1+a_2+a_3 \neq 0$\\
		By Corollary \ref{cor8-HM}, there exists a rainbow-free $3$-coloring which implies $\rb(\Z_p,eq)\geq 4$. Since  $a_1+a_2+a_3 \neq 0$ there is a unique $s\in\Z_p$ such that $s(a_1+a_2+a_3)=b$. Suppose there is a 4-coloring of $\Z_p$ with color classes $A$, $B$, $C$, and $D$ such that $s\in A$. Create a $3$-coloring with color classes $A\cup B$, $C$, and $D$.  By construction, $s$ is not in a color class by itself. Theorem \ref{thm6-HM} now guarantees there is a rainbow solution in this $3$-coloring which corresponds to a rainbow solution in the $4$-coloring.  Thus, $\rb(\Z_p,eq)\leq 4$ and hence, $\rb(\Z_p,eq)=4$.
		
		\noindent {\bf Case 3}: $|\langle d_1,d_2,\dots , d_6\rangle|<p-1$, $a_1+a_2+a_3 = 0$, and $b=0$\\
		Since $|\langle d_1,d_2,\dots , d_6\rangle|<p-1$ and $b=0$, both conditions in Corollary \ref{cor8-HM} fail; hence, $\rb(\Z_p,eq)\geq 4$.  Note that in this case, every $s\in\Z_p$ satisfies $s(a_1+a_2+a_3)=b$.
		To show that $\rb(\Z_p,eq)\leq 4$, consider a $4$-coloring of  $\Z_p$.\\
		\underline{Case 3.1:} At most two color classes have size one. \\
		Combine the two smallest color classes to make a $3$-coloring that has no color classes of size one. By Theorem \ref{thm6-HM}, this $3$-coloring contains a rainbow solution. Thus, the original $4$-coloring contains a rainbow solution, which implies $\rb(\Z_p,eq)\leq 4$. \\
		Note, if there are at least three color classes of size one, then the argument used in Case $3.1$ does not hold.  Essentially, combining the two smallest color classes will give a $3$-coloring that has a color class with one element.\\
		\underline{Case 3.2}: At least three color classes have size one.\\
		Let $A=\{s_1\}$ and $B=\{s_2\}$ be two of the three color classes of size one.  Let $s_3 = a_3^{-1}(-a_1s_1-a_2s_2)$, then $(s_1,s_2,s_3)$ is a solution. Since $s_1\ne s_2$, by Lemma~\ref{lem-Anisah}, then $s_1$, $s_2$, $s_3$ are distinct.  Therefore, $(s_1, s_2, s_3)$ is a rainbow solution. Thus, the $4$-coloring contains a rainbow, which implies $\rb(\Z_p,eq)\leq 4$. 
	\end{proof}
	
	Note that Theorem \ref{thm:rbzp} considered equations where $a_i\neq a_j$ for some $i\neq j$.  For the remainder of this section it is assumed that $a_1=a_2=a_3$.  To handle equations of this type, Theorem \ref{thm3-HM} and Lemma \ref{apl} are essential.
	
	\begin{lemma}\label{apl} Suppose sets $A$, $B$, $C$, and $D$ partition $\Z_p$. The sets $A \cup B$, $A \cup C$, $A \cup D$, $B$, $C$, and $D$ cannot all be arithmetic progressions with common difference $d\neq 0$.
	\end{lemma}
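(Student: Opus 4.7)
The plan is to reduce to the case $d=1$ via the dilation $x\mapsto d^{-1}x$, which is valid because $p$ is prime and so $d\in\Z_p^\ast$; this bijection turns APs with common difference $d$ into \emph{arcs} (sets of consecutive elements) in the cycle $\Z_p$ and sends partitions to partitions. Without loss of generality all six sets $A\cup B$, $A\cup C$, $A\cup D$, $B$, $C$, $D$ are then arcs in $\Z_p$. I will also assume the parts $A, B, C, D$ are all nonempty, as is the case when they are the color classes of a $4$-coloring.

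The first main step is to observe that in $\Z_p$, the complement of any proper nonempty arc is again an arc. Since $A, B, C, D$ are all nonempty, each of $A\cup B$, $A\cup C$, $A\cup D$ is a proper nonempty arc, so its complement in $\Z_p$ is also an arc. These complements are exactly $C\cup D$, $B\cup D$, and $B\cup C$, so all three pairwise unions among $B, C, D$ are arcs as well.

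The second main step is to analyze the cyclic arrangement of the three pairwise disjoint nonempty arcs $B, C, D$. They appear in some cyclic order which, by relabeling, I may take to be $B, C, D$ going clockwise, with three (possibly empty) gap regions $X_1, X_2, X_3 \subseteq A$ between consecutive pairs. The key geometric fact is that two disjoint nonempty arcs in $\Z_p$ have arc union if and only if they are cyclically adjacent, since otherwise a nonempty gap on at least one side prevents the union from being a single consecutive interval. Applying this to $B\cup C$, $C\cup D$, and $B\cup D$ forces $X_1=X_2=X_3=\emptyset$ respectively; note that for $B\cup D$ the only possible adjacency is through $X_3$, since the alternative cyclic path contains the nonempty arc $C$. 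But then $A=X_1\cup X_2\cup X_3=\emptyset$, contradicting nonemptiness of $A$. The main delicate point will be this cyclic-adjacency step, which requires some care because arcs in $\Z_p$ can wrap around zero; the resolution is that any ``wraparound'' adjacency between two of $B, C, D$ would require the intervening arc among $B, C, D$ to be empty, which it is not.
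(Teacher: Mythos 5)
Your proof is correct, and it takes a genuinely different (dual) route from the paper's. The paper neither dilates nor passes to complements: it argues that since $A\cup B$ is a progression with the same common difference as its sub-progression $B=\{\beta,\beta+d,\dots,\beta+kd\}$, the nonempty set $A$ must contain an element flanking $B$ (namely $\beta-d$ or $\beta+(k+1)d$), and likewise elements flanking $C$ and $D$; because $A$ can only attach at two ``ends,'' this forces two of $B$, $C$, $D$ to overlap, contradicting disjointness. You instead normalize to $d=1$ and take complements, noting that $\overline{A\cup B}=C\cup D$, $\overline{A\cup C}=B\cup D$, and $\overline{A\cup D}=B\cup C$ are arcs, so that $B$, $C$, $D$ must be pairwise cyclically adjacent and all three gaps between them vanish; your contradiction lands on $A=\emptyset$ rather than on the disjointness of $B$, $C$, $D$. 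The complementation step is the new ingredient, and it buys a cleaner endgame: you make explicit the nonemptiness of all four parts (which the paper uses silently, and without which the statement fails --- take $A=\emptyset$ with $B$, $C$, $D$ consecutive arcs), and you spell out the adjacency/wraparound case analysis that the paper compresses into the single sentence asserting that $B$, $C$, $D$ cannot remain pairwise disjoint. The cost is only the routine dilation reduction, which the paper avoids by carrying the common difference $d$ through the argument.
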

	
	\begin{proof} For the sake of contradiction, suppose $A \cup B$, $A \cup C$, $A \cup D$, $B$, $C$, and $D$ are all arithmetic progressions with common difference $d$.  Define $B = \{\beta, \beta + d, \dots, \beta + kd\}$.  Since $B$ and $A\cup B$ are both arithmetic progressions with the same common difference, then $A$ contains $\beta - d$ or $\beta + (k+1)d$.  Similarly, this applies to $C$ and $A\cup C$ and applies to $D$ and $A\cup D$.  However, this implies that $B$, $C$, and $D$ are not pairwise disjoint, a contradiction.
	\end{proof}
	
	\begin{thm}\label{equalupperbound2} If $a\in \Z_p^*$, $b\in \Z_p$,  and $p\ge5$, then $\rb(\Z_p,\eq(a,a,a,b)) = 4$.
	\end{thm}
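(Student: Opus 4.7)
The plan is to reduce to the homogeneous equation $\eq(1,1,1,0)$, exhibit a rainbow-free $3$-coloring for the lower bound, and prove the upper bound by a merging argument that combines Theorem~\ref{thm5-HM} with Lemma~\ref{apl}. First, multiplying $\eq(a,a,a,b)$ by $a^{-1}$ yields $\eq(1,1,1,a^{-1}b)$, and since $1+1+1=3\in\Z_p^*$ for $p\geq 5$, Lemma~\ref{lem:J} further reduces the problem to $\eq(1,1,1,0)$. For the lower bound $\rb(\Z_p,\eq(1,1,1,0))\geq 4$, I would use the exact $3$-coloring with classes $\{0\}$, $\{1,-1\}$, and $\Z_p\setminus\{0,1,-1\}$: both larger classes are symmetric and avoid $0$, so Case~(1) of Theorem~\ref{thm5-HM} with $s=0$ (so that $b-2s=0$ and no elements are removed) directly certifies rainbow-freeness.

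For the upper bound, suppose for contradiction that $\{A,B,C,D\}$ is a rainbow-free exact $4$-coloring of $\Z_p$ with $0\in A$. Every pairwise merging of two classes yields a rainbow-free exact $3$-coloring, which by Theorem~\ref{thm5-HM} is either of Case~(1) type (a singleton class plus two classes satisfying a translated-symmetry condition) or of Case~(2) type (three arithmetic progressions with a common difference). The key structural observation is that a singleton class of a merged $3$-coloring must be one of the unmerged original classes. Focusing on the three mergings $\{A\cup B,C,D\}$, $\{A\cup C,B,D\}$, $\{A\cup D,B,C\}$: the merged class contains $0$ and so has size at least $2$, and the two unmerged classes lie in $\{B,C,D\}$, which do not contain $0$. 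Hence, provided at most one of $B,C,D$ is a singleton, each of these three mergings must fall into Case~(2), certifying that $A\cup B, A\cup C, A\cup D, B, C, D$ are all APs with common differences $d_1$, $d_2$, $d_3$ (one from each merging).

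The final step is to show $d_1=\pm d_2=\pm d_3$, which upgrades all six of these sets to APs with a single common difference $d\neq 0$ and so directly contradicts Lemma~\ref{apl}. When $|B|,|C|,|D|\geq 3$, each of these three classes appears as a nontrivial AP in two of the three mergings, and any AP in $\Z_p$ of size $3\leq k<p$ has a common difference that is unique up to sign, so the alignment is immediate. The main obstacle is the regime in which some of $B,C,D$ (or indeed $A$) has size at most $2$ and the unique-difference reasoning fails; the plan is to handle these by exploiting the short set $A\cup X$ (for a small class $X$) as a $3$-element AP that itself pins down the common difference, together with a direct check for the very small prime $p=5$, where only the size distribution $(1,1,1,2)$ arises and the only distinct triples in $\Z_5$ summing to $0$ are $\{0,1,4\}$ and $\{0,2,3\}$, which force a rainbow outright.
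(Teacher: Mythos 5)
There is a genuine gap in the upper-bound argument: the singleton-class cases are not actually handled, and the sketch you give for them cannot work as described. Your reduction to $\eq(1,1,1,0)$, your lower-bound coloring, and your main case (merging $A$ into each of $B,C,D$ and invoking Lemma~\ref{apl}) all match the paper's Case~1, and your observation about common differences being unique up to sign for APs of size at least $3$ is a fair way to align the three mergings. But your claim that ``provided at most one of $B,C,D$ is a singleton, each of these three mergings must fall into Case~(2)'' is false: if, say, $B=\{\beta\}$ is a singleton, then the mergings $\{A\cup C,B,D\}$ and $\{A\cup D,B,C\}$ each have $B$ as their smallest class, so Theorem~\ref{thm5-HM} forces them into its condition~(1) (the translated-symmetric-set alternative), not condition~(2). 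Consequently there is no arithmetic progression, and no common difference, to extract from those mergings; your proposed fix of ``exploiting the short set $A\cup X$ as a $3$-element AP that pins down the common difference'' has nothing to attach to, because the relevant conclusion of Theorem~\ref{thm5-HM} in this regime is a symmetry condition rather than an AP condition.

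These deferred cases are precisely where the paper does substantial work. With exactly two singletons $A=\{s\}$, $B=\{\beta\}$, the paper first exhibits the explicit solution $\{s,\beta,-s-\beta\}$ (rainbow unless $\beta=-2s$), and then, in the subcase $\beta=-2s$, plays the AP structure of the merging $\{A\cup B,C,D\}$ against the symmetric-set condition of the merging $\{A,B\cup C,D\}$ to reach a contradiction. With three or more singletons, it dilates so that $A=\{1\}$ and uses Theorem~\ref{thm3-HM} part~2.b to force $\beta=-2$ and $\gamma=-2^{-1}$, then checks that $(-2,-2^{-1},2+2^{-1})$ is rainbow for $p\ge 5$; your direct check covers only $p=5$, whereas three singletons can occur for every prime. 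Until you supply arguments of this kind for the singleton cases, the proof is incomplete.
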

	
	\begin{proof} 
		Since $p\ge 5$, then $3a \in\Z_p^*$. By Lemma \ref{lem:J}, it is enough to consider $ax_1+ax_2+ax_3=0$. Furthermore, because $a\in\Z_p^*$, the triple $(s_1,s_2,s_3)$ is a solution to $ax_1+ax_2+ax_3 = 0$ if and only if it is a solution to $x_1+x_2+x_3=0$. Thus, without loss of generality, the rest of the argument only considers $x_1+x_2+x_3 = 0$. The exact rainbow-free $3$-coloring of $\Z_p$ with color classes $\{0\}, \{1,p-1\}, \{2,3,\dots,p-2\}$ establishes that $4\le \rb(\Z_p,\eq(1,1,1,0))$.  Suppose there is an exact $4$-coloring of $\Z_p$ with color classes $A$, $B$, $C$, and $D$ such that $|A| \le |B| \le |C| \le |D|$.  It will be shown that a rainbow solution exists in the aforementioned exact $4$-coloring.\\ 
		\textbf{Case 1:} At most one color class has size one.\\
		Consider the exact $3$-colorings with color classes: $A\cup B$, $C$, $D$; $B$, $A\cup C$, $D$; and $B$, $C$, $A\cup D$.  If each of them is rainbow-free, then, by Theorem \ref{thm5-HM}, each of $A \cup B$, $A \cup C$, $A \cup D$, $B$, $C$, and $D$ are arithmetic progressions with the same common difference $d$.  This contradicts Lemma \ref{apl} so one of the exact $3$-colorings must have a rainbow solution.\\
		\textbf{Case 2:} Exactly two color classes have size one. \\
		Let $A = \{s\}$ and $B = \{\beta\}$.  If $\beta \neq -2s$, then $\{s, \beta, -s-\beta\}$ is a rainbow solution.  Thus, without loss of generality, assume $\beta = -2s$.  Note this also means $s \neq 0$.  Consider the exact $3$-coloring with color classes $A\cup B$, $C$, $D$.  If this coloring is rainbow-free, then, by Theorem \ref{thm5-HM}.$ii$, $A\cup B$, $C$ and $D$ must be arithmetic progressions with common difference $d$.  Further, $d^{-1}(A\cup B)$, $d^{-1}C$ and $d^{-1}D$ are sets of consecutive integers and is a rainbow-free exact $3$-coloring.  Now consider the exact $3$-coloring with color classes $d^{-1}A$, $d^{-1}(B\cup C)$, $d^{-1}D$.  Theorem \ref{thm5-HM}.$i$ implies that $d^{-1}(B\cup C)\backslash\{d^{-1}(-2s)\} + s2^{-1} = d^{-1}C + s2^{-1}$ and $d^{-1}D + s2^{-1}$ are symmetric.  However, the color classes must also be consecutive which would imply $\beta = -s$, which is a contradiction.\\
		\textbf{Case 3:}  At least three color classes have size one.\\
		Without loss of generality, dilate the coloring so that $A=\{1\}$, $B = \{\beta\}$, and $C=\{\gamma\}$.  Note that if the exact $3$-colorings with color classes $A$, $C$, $B\cup D$ and $A$, $B$, $C\cup D$ are rainbow-free, then they must be of the form described in Theorem \ref{thm3-HM} part $2.b$.  This means $B\backslash\{-2\} +2^{-1}\in \{\emptyset,\{0\}\}$ and $C  \backslash\{-2\} +2^{-1}\in\{\emptyset,\{0\}\}$.  So, without loss of generality, $\beta = -2$ and $\gamma = -(2^{-1})$.  Notice that $(-2,-(2^{-1}), 2+2^{-1})$ is a rainbow solution because $-2 = -(2^{-1})$, $2+2^{-1} = -2$ or $2+2^{-1} = -(2^{-1})$ imply $p\in \{2,3\}$.\\
		
		In all cases, an exact $3$-coloring constructed from the original exact $4$-coloring has a rainbow solution. Thus the original exact $4$-coloring has a rainbow solution.\end{proof}
	
	
	\section{Rainbow Numbers of $\Z_n$ for $a_1x_1+a_2x_2 + a_3x_3 =b$}\label{sec:zn}
	
	In this section the rainbow number for $\Z_n$ will be established under certain conditions on the coefficients. Since $2$ is a special case, the rainbow number for $\Z_{2^\alpha}$ will be considered first. Then the lower and upper bounds are established for general $n$.  
	
	Let $A$ and $B$ be sets and $m,n\in \Z$.  $(A,m,eq)$ is \emph{solomorphic} to $(B, n, eq')$ when there exists a function $\phi:A\to B$ such that $\{s_1,s_2,s_3\} \subset A$ is a solution to $eq \bmod m$ if and only if $\{\phi(s_1),\phi(s_2),\phi(s_3)\} \subset B$ is a solution to $eq' \bmod n$.  Note that solomorphic sets have the same rainbow number.
	
	\begin{thm}\label{thm:23}
		If $a_1a_2a_3 \in \Z_{2}^{\ast}$, then $$\rb(\Z_{2^\alpha},\eq(a_1,a_2,a_3,b))=\alpha+2.$$ 
	\end{thm}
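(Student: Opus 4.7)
The plan is to prove the two inequalities $\rb(\Z_{2^\alpha}, eq) \geq \alpha + 2$ and $\rb(\Z_{2^\alpha}, eq) \leq \alpha + 2$ separately. The hypothesis $a_1a_2a_3 \in \Z_2^*$ means each $a_i$ is odd and hence invertible in $\Z_{2^\alpha}$, so $a_1+a_2+a_3$ is also odd and invertible. By Lemma~\ref{lem:J}, I may reduce throughout to the case $b=0$.

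For the lower bound, I would exhibit a rainbow-free exact $(\alpha+1)$-coloring of $\Z_{2^\alpha}$ by partitioning according to the $2$-adic valuation: for $v = 0, 1, \ldots, \alpha$, let $A_v = \{x \in \Z_{2^\alpha} : v_2(x) = v\}$, with the convention $v_2(0) = \alpha$. If $(s_1,s_2,s_3)$ were a rainbow solution, then $v_2(s_1), v_2(s_2), v_2(s_3)$ would be pairwise distinct, so at most one $s_i$ is $0$ and the minimum valuation is strictly less than $\alpha$. Because each $a_i$ is a unit, $v_2(a_is_i) = v_2(s_i)$, so the sum $a_1s_1+a_2s_2+a_3s_3$ has $2$-adic valuation equal to that minimum, hence is strictly less than $\alpha$, contradicting the sum being $0$ modulo $2^\alpha$.

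For the upper bound I would induct on $\alpha$, with base case $\alpha=1$ handled by the convention $\rb(\Z_2, eq) = 3$. For the inductive step, suppose toward contradiction that there is a rainbow-free exact $(\alpha+2)$-coloring of $\Z_{2^\alpha}$. The key observation is that the even subgroup $E := 2\Z_{2^\alpha}$ is solomorphic to $\Z_{2^{\alpha-1}}$ (under the same coefficients and $b=0$) via $\phi(2y) = y$, since $a_1(2y_1)+a_2(2y_2)+a_3(2y_3) \equiv 0 \pmod{2^\alpha}$ if and only if $a_1y_1+a_2y_2+a_3y_3 \equiv 0 \pmod{2^{\alpha-1}}$. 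The inductive hypothesis applied to the induced coloring on $\Z_{2^{\alpha-1}}$ forces at most $\alpha$ distinct colors to appear on $E$. Hence at least two of the $\alpha+2$ colors are confined to the odd set $O := \Z_{2^\alpha}\setminus E$; pick $o_1, o_2 \in O$ realizing two such colors. Then $s_3 := -a_3^{-1}(a_1o_1 + a_2o_2)$ lies in $E$ (the bracket is a sum of two odd numbers, hence even, and $a_3^{-1}$ is odd), so its color is distinct from both $c(o_1)$ and $c(o_2)$; thus $(o_1, o_2, s_3)$ is a rainbow solution, contradicting rainbow-freeness.

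The main point to verify carefully is the solomorphism between $(E, 2^\alpha, eq)$ and $(\Z_{2^{\alpha-1}}, 2^{\alpha-1}, eq)$, since it critically uses that $2$ is a zero divisor modulo $2^\alpha$ and that the modulus must drop by one when a factor of $2$ is cancelled. Once this is in hand, the construction of the rainbow triple in the inductive step is essentially forced by parity, and the distinctness of $o_1, o_2, s_3$ is automatic from their three distinct colors.
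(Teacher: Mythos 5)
Your proof is correct and follows essentially the same route as the paper: the upper bound is the same induction on $\alpha$ splitting $\Z_{2^\alpha}$ into evens and odds, using the solomorphism of the even subgroup with $\Z_{2^{\alpha-1}}$ and then producing a rainbow triple from two colors confined to the odd residues. Your lower-bound coloring by $2$-adic valuation is exactly the paper's inductive construction unrolled into a single explicit coloring, verified via the ultrametric property instead of by induction.
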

	
	\begin{proof} The proof follows by induction on $\alpha$. The base case $\alpha=1$ holds by convention. Note that since $a_1a_2a_3 \in \Z_{2}^{\ast}$, then $a_i\equiv 1\bmod 2$ for all $i$ and, by Lemma~\ref{lem:J}, it can be assumed that $b=0$. 
		Let $0\leq \alpha \in \Z$ and assume the statement holds for $\alpha\geq 1$. The following cases show the statement is true for $\alpha +1$.  .   Let $c$ be an exact $\alpha+2$ coloring of $\Z_{2^\alpha}$ and define $R_i=\{x\in \Z_{2^\alpha} \mid x\equiv i \bmod 2\}$ and $P_i=\{c(x) \mid x \in R_i\}$. 
		
		If at least $\alpha+1$ colors appear in $P_0$, then, by the inductive hypothesis, $\Z_{2^\alpha}$ contains a rainbow solution to $eq=\eq(a_1,a_2,a_3,0)$ because $(\Z_{2^{\alpha-1}}, 2^{\alpha-1}, eq)$ is solomorphic to $(R_0,2^\alpha,eq)$. If at most $\alpha$ colors appear in $P_0$, then there exist two colors, red and blue, that appear in $P_1$. Let $s_1$ and $s_2$ be two elements in $R_1$ such that $c(\{s_1,s_2\}) = \{red,blue\}$. Since $s_1\equiv s_2\equiv 1 \bmod 2$, there exists $s_3\in \Z_{2^\alpha}$ such that $a_1(s_1+s_2)+a_1s_3\equiv 0\bmod 2$, which implies $s_3\in R_0$.  This means $\{s_1,s_2,s_3\}$ is a rainbow solution to $\eq(a_1,a_2,a_3,0)$ since $c(s_3) \notin \{red,blue\}$.  Thus $\rb(Z_{2^\alpha},\eq(a_1,a_2,a_3,0)) \le \alpha+2$.  To obtain a lower bound, color $P_0$ with a rainbow-free coloring of $\Z_{2^{\alpha-1}}$ that has $\alpha$ colors and color $P_1$ with the $(\alpha+1)^{st}$ color.  This coloring has no rainbow solutions since every solution has exactly $1$ or $3$ elements from $R_0$.
	\end{proof}

	Theorem~\ref{thm:lowerbound1} and Corollary~\ref{cor:lowerboundzn} establish the lower bound for the rainbow number of $\Z_n$.
	
	\begin{thm}\label{thm:lowerbound1}
		Let $2\le t\in\Z$. If $ a_1a_2a_3 \in \Z_{p}^{\ast}$, $p$ prime, then $$\rb(\mathbb{Z}_p, \eq(a_1,a_2,a_3,0)) + \rb(\mathbb{Z}_t,\eq(a_1,a_2,a_3,0)) -2\le \rb (\mathbb{Z}_{pt}, \eq(a_1,a_2,a_3,0)). $$
	\end{thm}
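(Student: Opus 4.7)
The plan is to construct a rainbow-free exact coloring of $\Z_{pt}$ using $R_p + R_t - 3$ colors, where $R_p = \rb(\Z_p, \eq(a_1,a_2,a_3,0))$ and $R_t = \rb(\Z_t, \eq(a_1,a_2,a_3,0))$. The setup uses the projection $\pi : \Z_{pt} \to \Z_p$, $\pi(x) = x \bmod p$, and the subgroup $F = p\Z_{pt} = \pi^{-1}(0)$. Two facts drive everything: reducing any solution in $\Z_{pt}$ modulo $p$ yields a solution in $\Z_p$, and the bijection $k \mapsto pk$ between $\Z_t$ and $F$ identifies solutions on each side, since $(pk_1, pk_2, pk_3)$ solves the equation modulo $pt$ precisely when $(k_1, k_2, k_3)$ solves it modulo $t$.

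Fix a rainbow-free exact $(R_p-1)$-coloring $c_p$ of $\Z_p$ in which $\{0\}$ is isolated as its own color class (labeled $1$), and a rainbow-free exact $(R_t-1)$-coloring $c_t$ of $\Z_t$ with $c_t(0) = 1$. Define $c:\Z_{pt}\to [R_p+R_t-3]$ by $c(x) = c_p(\pi(x))$ on $\Z_{pt}\setminus F$ and, for $x = pk \in F$, by $c(x) = 1$ if $c_t(k) = 1$ and $c(x) = c_t(k) + R_p - 2$ otherwise. The elements outside $F$ use the $R_p - 2$ colors $\{2,\dots,R_p-1\}$, while the elements of $F$ use the $R_t - 1$ colors $\{1\}\cup\{R_p,\dots,R_p+R_t-3\}$, with the shared color $1$ bringing the total to $R_p + R_t - 3$.

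For rainbow-freeness, let $(s_1,s_2,s_3)$ be a solution in $\Z_{pt}$ and count $|\{i : s_i \in F\}|$. The count cannot be $2$: the unique coordinate $\pi(s_j) \neq 0$ would then have to satisfy $a_j \pi(s_j) \equiv 0 \bmod p$, contradicting $a_j \in \Z_p^{\ast}$. If the count is $3$, the triple $(k_1,k_2,k_3) \in \Z_t$ with $s_i = pk_i$ is a solution in $\Z_t$ and $c_t$ forces a repeated color among the $c(s_i)$. If the count is $0$, the projection is a solution in $\Z_p$ with every entry nonzero and $c_p$ forces the repeat.

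The main obstacle is the mixed case of exactly one $s_i\in F$: the projection $(0,\pi(s_2),\pi(s_3))$ with $\pi(s_2),\pi(s_3)\neq 0$ is a $\Z_p$-solution that must contain two equal $c_p$-colors. Here the isolation of $0$ in $c_p$ prevents either nonzero coordinate from matching the color of $\pi(s_1) = 0$, so the forced match is $c_p(\pi(s_2)) = c_p(\pi(s_3))$, giving $c(s_2) = c(s_3)$. What remains is to secure the existence of such a $c_p$: for $R_p = 3$ the two-coloring $\{0\},\Z_p\setminus\{0\}$ suffices; for $R_p = 4$ with some $a_i \neq a_j$, Theorem \ref{thm6-HM} provides a rainbow-free exact $3$-coloring with singleton class $\{s\}$, and $s = 0$ either is automatic (when $a_1+a_2+a_3 \neq 0$) or can be arranged by translation (when $a_1+a_2+a_3 = 0$, exploiting translation-invariance of the equation); for $a_1 = a_2 = a_3$, the explicit coloring $\{0\},\{1,p-1\},\{2,\dots,p-2\}$ from the proof of Theorem \ref{equalupperbound2} does the job.
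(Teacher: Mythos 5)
Your proof is correct and follows essentially the same route as the paper: the paper likewise builds an exact $(R_p+R_t-3)$-coloring of $\Z_{pt}$ by coloring the complement of $p\Z_{pt}$ via a rainbow-free coloring of $\Z_p$ that isolates $\{0\}$ (justified by the same appeal to Theorem~\ref{thm6-HM} and Theorem~\ref{equalupperbound2}), coloring $p\Z_{pt}\cong\Z_t$ via a rainbow-free coloring of $\Z_t$, and then running the identical case analysis on how many of $s_1,s_2,s_3$ lie in $p\Z_{pt}$. The one small divergence is that the paper assigns $0\in\Z_{pt}$ a color of its own rather than keeping it with its $c_t$-class as you do; your bookkeeping makes the ``all three divisible by $p$'' case transfer verbatim from $c_t$.
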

	
	\begin{proof}   Since $\rb(\Z_p,eq)\le 4$, every rainbow-free coloring of $\Z_p$ for $eq$ uses 
		at most three colors. Define $r_p =\rb(\Z_p, eq)-1$ and $r_t=\rb(\Z_t, eq) -1$. Note there exists an exact rainbow-free $r_p$-coloring of $\Z_p$ for $eq$ where $0$ is the only element in its color class. If $r_p=2$ or $p=3$, the $r_p$-coloring is obvious. If $r_p=3$, $p\ge 5$, and $a_i \neq a_j$, for some $i \neq j$, such a coloring exists by Theorem \ref{thm6-HM}.   Lastly, if $a_1=a_2=a_3$ the coloring is described in Theorem \ref{equalupperbound2}.
		
		Let $c_p$ be an exact, rainbow-free $r_p$-coloring of $\Z_p$ for $eq$ such that $0$ is in a color class of its own and $c_t$ be an exact, rainbow-free $r_t$-coloring of $\Z_t$ for $eq$.  Define an exact $(r_p+r_t-1)$-coloring of $\Z_{pt}$ by
		
		$$c(x)=\left\{\begin{array}{ll}
		0 & \text{if $x = 0$},\\
		c_p(x\bmod p) & \text{if $x\neq 0 \bmod p$},\\
		(r_p-1)+c_t\left(\frac{x}{p} \bmod t \right) & \text{if $x= 0\bmod p$ and $x\neq 0$}.
		\end{array}
		\right.
		$$
		
		Let $(s_1, s_2, s_3)$ be a solution in $\Z_{pt}$ to $eq$.  Since $a_1a_2a_3\in\Z_{p}^*$, $p$ cannot divide exactly two of $s_1$, $s_2$, and $s_3$, so either $p$ divides each of $s_1$, $s_2$, and $s_3$ or $p$ divides at most one of $s_1$, $s_2$, and $s_3$.
		
		If $p$ divides each of $s_1$, $s_2$, and $s_3$, then $(s_1,s_2,s_3)$ is not a rainbow solution under the coloring $c$ since it is not a rainbow solution under $c_t$.  If $p$ divides at most one of $s_1$, $s_2$, and $s_3$, then $(s_1,s_2,s_3)$ is not a rainbow solution under the coloring $c$ since it is not a rainbow solution under $c_p$ and $0$ is the unique element in its color class under $c_p$.  Therefore, $c$ is a rainbow-free coloring of $\Z_{pt}$.  This implies 
		$$\rb(\Z_{pt},eq) \ge (r_p+r_t-1)+1 = \rb(\mathbb{Z}_p, eq) + \rb(\mathbb{Z}_t,eq) -2.$$
	\end{proof}

	\begin{cor}\label{cor:lowerboundzn} If $n = p_1^{\alpha_1}p_2^{\alpha_2}p_3^{\alpha_3} \ldots p_\ell^{\alpha_\ell}$, $p_k$ prime for $1\le k \le \ell$,  and $a_1a_2a_3\in \Z_{n}^\ast$, then
		
		$$2 + \sum_{i=1}^\ell \left[ \alpha_i (\rb (\Z_{p_i}, \eq(a_1,a_2,a_3,0) -2) \right] \le rb(\mathbb{Z}_n, \eq(a_1,a_2,a_3,0)). $$
	\end{cor}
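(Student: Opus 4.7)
The plan is to apply Theorem~\ref{thm:lowerbound1} iteratively, by induction on $\Omega(n) := \alpha_1 + \alpha_2 + \cdots + \alpha_\ell$, the number of prime factors of $n$ counted with multiplicity. The base case $\Omega(n) = 1$ means $n$ equals some prime $p_i$, and the right-hand side of the desired inequality collapses to $2 + (\rb(\Z_{p_i}, \eq(a_1,a_2,a_3,0)) - 2) = \rb(\Z_{p_i}, \eq(a_1,a_2,a_3,0))$, matching the left-hand side with equality.

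For the inductive step, assume the bound holds whenever $\Omega(\cdot) < \Omega(n)$ and $\Omega(n) \ge 2$. I would select any prime divisor $p_j$ of $n$ and write $n = p_j \cdot m$, where $m = n/p_j$. Since $\Omega(m) = \Omega(n) - 1 \ge 1$, we have $m \ge 2$. The hypothesis $a_1a_2a_3 \in \Z_n^{\ast}$ forces $\gcd(a_1a_2a_3, n) = 1$, which in turn gives $a_1a_2a_3 \in \Z_{p_j}^{\ast}$ and $a_1a_2a_3 \in \Z_m^{\ast}$. Theorem~\ref{thm:lowerbound1} then yields
$$\rb(\Z_n, \eq(a_1,a_2,a_3,0)) \ge \rb(\Z_{p_j}, \eq(a_1,a_2,a_3,0)) + \rb(\Z_m, \eq(a_1,a_2,a_3,0)) - 2.$$
Next, I would invoke the inductive hypothesis on $m$, whose factorization is $p_j^{\alpha_j-1} \prod_{i \ne j} p_i^{\alpha_i}$, to lower-bound $\rb(\Z_m, \eq(a_1,a_2,a_3,0))$. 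Substituting and simplifying---the freshly introduced $\rb(\Z_{p_j}, \eq(a_1,a_2,a_3,0))$ rewrites as $(\rb(\Z_{p_j}, \eq(a_1,a_2,a_3,0)) - 2) + 2$, whose $+2$ cancels the $-2$ from the theorem and whose remaining $(\rb(\Z_{p_j}, \eq(a_1,a_2,a_3,0)) - 2)$ combines with the $(\alpha_j - 1)(\rb(\Z_{p_j}, \eq(a_1,a_2,a_3,0)) - 2)$ supplied by the hypothesis to form the full $\alpha_j(\rb(\Z_{p_j}, \eq(a_1,a_2,a_3,0)) - 2)$ term---yields exactly the claimed inequality.

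The argument is essentially bookkeeping, and I do not anticipate any real obstacle. The two mild things to verify are that the coprimality condition is preserved when peeling off a prime (immediate from $a_1a_2a_3 \in \Z_n^{\ast}$) and that the edge case $\alpha_j = 1$, in which $p_j$ no longer appears in the factorization of $m$, is absorbed uniformly: the inductive hypothesis then contributes zero copies of $(\rb(\Z_{p_j}, \eq(a_1,a_2,a_3,0)) - 2)$, which is consistent with $\alpha_j - 1 = 0$ in the calculation above.
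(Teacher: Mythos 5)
Your proposal is correct and matches the paper's own argument: both peel off a single prime factor $p_j$, apply Theorem~\ref{thm:lowerbound1} with $t = n/p_j$, and invoke the inductive hypothesis on $n/p_j$, with the same telescoping of the $\pm 2$ terms. Your explicit handling of the coprimality check and the $\alpha_j = 1$ edge case is slightly more careful than the paper's write-up, but the route is the same.
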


	\begin{proof}
		Define $eq = \eq(a_1,a_2,a_3,0)$ and note that if $n$ is prime there is nothing to show. Suppose that the claim holds for all $k < n$. Consider $k = \frac{n}{p_j} = p_1^{\alpha_1} p_2^{\alpha_2} \ldots p_j^{\alpha_j - 1} \ldots p_\ell^{\alpha_\ell}$. By the inductive hypothesis, 
		
		$$\rb (\mathbb{Z}_{k}, eq) \geq 2 + (\alpha_j-1)(\rb (\Z_{p_j}, eq)-2) + \sum_{i=2}^\ell  \alpha_i \left[\rb (\Z_{p_i}, eq) -2 \right].$$
		
		Applying Theorem \ref{thm:lowerbound1} gives \begin{eqnarray*}
			\rb (\mathbb{Z}_{\frac{n}{p_j} \cdot p_j}, eq) &\geq& \rb(\mathbb{Z}_{p_j}, eq) + \rb(\mathbb{Z}_k,eq) -2\\
			&\geq& \rb(\mathbb{Z}_{p_j}, eq) +  2 + (\alpha_j-1)(\rb (\Z_{p_j}, eq)-2) + \sum_{i=2}^\ell \alpha_i \left[\rb (\Z_{p_i}, eq) -2 \right]-2\\
			&= &2+ \sum_{i=1}^\ell \alpha_i\left[  \rb (\Z_{p_i}, eq) -2 \right],
		\end{eqnarray*}
		as desired.
	\end{proof}
	
	Corollary~\ref{thm:lowerboundzn-b} generalizes Corollary~\ref{cor:lowerboundzn} to $\eq(a_1,a_2,a_3,b)$ using Lemma~\ref{lem:J}. 
	\begin{cor}\label{thm:lowerboundzn-b}
		If $n = p_1^{\alpha_1}p_2^{\alpha_2}p_3^{\alpha_3} \ldots p_\ell^{\alpha_\ell}$, $p_k$ prime for $1\le k \le \ell$,  and  $a_1+a_2+a_3, a_1a_2a_3\in \Z_{n}^\ast$, then
		
		$$2 + \sum_{i=1}^\ell \left[ \alpha_i (\rb (\Z_{p_i}, \eq(a_1,a_2,a_3,b) -2) \right]\le \rb(\mathbb{Z}_n, \eq(a_1,a_2,a_3,b)).$$
	\end{cor}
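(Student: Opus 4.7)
The plan is to deduce Corollary~\ref{thm:lowerboundzn-b} from Corollary~\ref{cor:lowerboundzn} by applying Lemma~\ref{lem:J} at two different places: once in $\Z_n$ itself (to handle the left-hand side of the claimed inequality) and once in each $\Z_{p_i}$ (to handle the terms inside the sum on the right-hand side). The hypothesis $a_1+a_2+a_3 \in \Z_n^\ast$ is exactly what is needed to invoke the lemma.

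First, I would observe that because $\gcd(a_1+a_2+a_3, n) = 1$, the translation $T(x) = x - b(a_1+a_2+a_3)^{-1}$ from Lemma~\ref{lem:J} is well-defined modulo $n$. Since Lemma~\ref{lem:J} provides a one-to-one correspondence between rainbow-free $k$-colorings of $\Z_n$ for $\eq(a_1,a_2,a_3,b)$ and for $\eq(a_1,a_2,a_3,0)$, the maximum $k$ for which a rainbow-free exact $k$-coloring exists is the same for both equations. Hence
\[
\rb(\Z_n, \eq(a_1,a_2,a_3,b)) \;=\; \rb(\Z_n, \eq(a_1,a_2,a_3,0)).
\]

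Next, I would show the analogous equality at each prime factor $p_i$ of $n$. The key small observation is that $a_1+a_2+a_3 \in \Z_n^\ast$ forces $\gcd(a_1+a_2+a_3, p_i) = 1$, and therefore $a_1+a_2+a_3 \in \Z_{p_i}^\ast$. Applying Lemma~\ref{lem:J} to $\Z_{p_i}$ then yields
\[
\rb(\Z_{p_i}, \eq(a_1,a_2,a_3,b)) \;=\; \rb(\Z_{p_i}, \eq(a_1,a_2,a_3,0))
\]
for every $i$. Similarly, $a_1 a_2 a_3 \in \Z_n^\ast$ implies $a_1 a_2 a_3 \in \Z_{p_i}^\ast$ for each $i$, so the hypotheses of Corollary~\ref{cor:lowerboundzn} are satisfied.

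Finally, I would invoke Corollary~\ref{cor:lowerboundzn} for the equation $\eq(a_1,a_2,a_3,0)$ to obtain the lower bound with every $b$ replaced by $0$, and then substitute the two equalities above to recover the stated inequality for $\eq(a_1,a_2,a_3,b)$. There is no real obstacle here: this is a short ``transport of structure'' argument, and the only thing to be careful about is ensuring the unit hypothesis really does descend from $\Z_n$ to each $\Z_{p_i}$, which follows immediately from the standard fact that $\Z_n^\ast$ consists of the residues coprime to $n$.
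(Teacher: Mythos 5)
Your proposal is correct and matches the paper's intended argument exactly: the paper gives no formal proof but states that Corollary~\ref{thm:lowerboundzn-b} follows from Corollary~\ref{cor:lowerboundzn} via Lemma~\ref{lem:J}, which is precisely your two-level application of the translation (once over $\Z_n$ and once over each $\Z_{p_i}$, using that $a_1+a_2+a_3\in\Z_n^\ast$ descends to $\Z_{p_i}^\ast$). Your write-up is in fact slightly more explicit than the paper's one-line justification, but it is the same route.
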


	The upper bound will now be established. Suppose $c$ is a coloring of $\Z_{ut}$. The remainder of this section uses residue classes $R_i=\{z\in \Z_{ut}\mid z\equiv i \bmod u\}$ and color palettes $P_i=\{ c(z) \mid z\in R_i\}$ that were mentioned in the proof of Theorem \ref{thm:23}.
	
	\begin{lemma}
		\label{obs:pods}
		Let $3\le t,u\in \Z$, $a_3\in \Z_u^{\ast}$, and  $(s_1,s_2,s_3)$ and $(s_1',s_2',s_3')$ be solutions in $\Z_{ut}$ to $\eq(a_1,a_2,a_3,b)$. If $s_1'\in R_{{s_1}}$ and $s_2'\in R_{{s_2}}$, then $s_3'\in R_{{s_3}}$. 
	\end{lemma}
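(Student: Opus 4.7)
My plan is to exploit the two given solution equations by subtracting them and then reducing modulo $u$, so that the hypotheses $s_1' \in R_{s_1}$ and $s_2' \in R_{s_2}$ kill off the first two terms and leave a clean statement about $s_3' - s_3$ modulo $u$.

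Concretely, I would first write the two solution conditions in $\Z_{ut}$:
\[
a_1 s_1 + a_2 s_2 + a_3 s_3 = b \quad \text{and} \quad a_1 s_1' + a_2 s_2' + a_3 s_3' = b.
\]
Subtracting gives $a_1(s_1' - s_1) + a_2(s_2' - s_2) + a_3(s_3' - s_3) \equiv 0 \pmod{ut}$, and therefore the same congruence holds mod $u$. The hypothesis that $s_1' \in R_{s_1}$ and $s_2' \in R_{s_2}$ means $s_1' - s_1 \equiv 0$ and $s_2' - s_2 \equiv 0 \pmod u$, so the first two terms vanish modulo $u$, leaving $a_3(s_3' - s_3) \equiv 0 \pmod u$. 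Since $a_3 \in \Z_u^{\ast}$, multiplying by $a_3^{-1}$ yields $s_3' \equiv s_3 \pmod u$, which is precisely the statement $s_3' \in R_{s_3}$.

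There is no real obstacle here; the argument is a short computation. The only thing worth being careful about is distinguishing the modulus $ut$ (where the equation lives) from the modulus $u$ (which defines the residue classes $R_i$), so that the reduction step and the use of $a_3^{-1}$ are both taking place in $\Z_u$ rather than in $\Z_{ut}$. Note that the invertibility hypothesis is stated as $a_3 \in \Z_u^{\ast}$, not $\Z_{ut}^{\ast}$, which is exactly what the argument requires and no more.
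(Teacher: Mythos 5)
Your proof is correct and is essentially the same argument as the paper's: both reduce the solution condition from $\Z_{ut}$ to $\Z_u$, use $s_1'\equiv s_1$ and $s_2'\equiv s_2 \pmod u$ to eliminate the first two terms, and invert $a_3$ in $\Z_u$ to conclude $s_3'\equiv s_3 \pmod u$. The only cosmetic difference is that you subtract the two equations while the paper solves for $s_3'$ directly; your explicit care about which modulus is in play matches the paper's reasoning.
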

	\begin{proof}
		Since $a_1s_1' + a_2s_2' + a_3s_3' = b \bmod ut$ implies $a_1s_1' + a_2s_2' + a_3s_3' = b \bmod u$, solving for $s_3'$ over $\Z_u$ gives
		$$s_3'=a_3^{-1}(b-(a_1s_1+a_2s_2)) = a_3^{-1}(a_3s_3) \bmod u.$$
		Hence, $s_3' \in R_{{s_3}}$.  
	\end{proof}
	
	A similar argument to the one used in Lemma \ref{obs:pods} can be used for $a_1,a_2\in \Z_t^\ast$ and solving for $s_1'$ and $s_2'$ instead.
	
	\begin{lemma}\label{colortrans} If $k,n \in \mathbb{Z}$ such that $3\le n$, then $$\rb(\mathbb{Z}_{n},\eq(a_1,a_2,a_3,b)) = \rb(\mathbb{Z}_{n}, \eq(a_1,a_2,a_3,b+ (a_1+a_2+a_3)k)).$$
	\end{lemma}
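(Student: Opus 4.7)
The plan is to mimic the translation argument of Lemma~\ref{lem:J}, but using an additive shift on the variables rather than one that absorbs the constant $b$. Define $T:\Z_n\to\Z_n$ by $T(x)=x+k$. Since $T$ is a bijection with $T^{-1}(x)=x-k$, it induces a bijection on triples by acting coordinatewise.

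The first step is to verify the solution correspondence. If $(s_1,s_2,s_3)$ is a solution to $\eq(a_1,a_2,a_3,b)$, then
\begin{equation*}
a_1 T(s_1)+a_2 T(s_2)+a_3 T(s_3) = a_1s_1+a_2s_2+a_3s_3 + (a_1+a_2+a_3)k = b+(a_1+a_2+a_3)k,
\end{equation*}
so $(T(s_1),T(s_2),T(s_3))$ is a solution to $\eq(a_1,a_2,a_3,b+(a_1+a_2+a_3)k)$. The reverse implication follows identically by applying $T^{-1}$, giving a one-to-one correspondence between solutions of the two equations.

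Next, given an exact rainbow-free $r$-coloring $c$ of $\Z_n$ for $\eq(a_1,a_2,a_3,b)$, define $c'=c\circ T^{-1}$, i.e.\ $c'(x)=c(x-k)$. Because $T^{-1}$ is a bijection, $c'$ is still an exact $r$-coloring (its color classes are the $k$-translates of those of $c$). If $(s_1,s_2,s_3)$ were a rainbow solution to $\eq(a_1,a_2,a_3,b+(a_1+a_2+a_3)k)$ under $c'$, then by the correspondence above $(s_1-k,s_2-k,s_3-k)$ would be a solution to $\eq(a_1,a_2,a_3,b)$ whose three colors $c(s_i-k)=c'(s_i)$ are pairwise distinct, contradicting rainbow-freeness of $c$. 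Hence $c'$ is rainbow-free for the new equation, which yields $\rb(\Z_n,\eq(a_1,a_2,a_3,b+(a_1+a_2+a_3)k))\ge \rb(\Z_n,\eq(a_1,a_2,a_3,b))$.

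The reverse inequality follows by the symmetric argument: start from a rainbow-free coloring of $\Z_n$ for $\eq(a_1,a_2,a_3,b+(a_1+a_2+a_3)k)$, apply the shift $T$ (equivalently, use $-k$ in place of $k$), and obtain a rainbow-free coloring for $\eq(a_1,a_2,a_3,b)$. Combining the two inequalities gives the claimed equality. There is no real obstacle here; the only point that needs to be stated cleanly is that $T$ is a bijection of $\Z_n$, so exactness and the number of color classes are preserved when we pull the coloring back along $T^{-1}$.
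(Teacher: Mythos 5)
Your proposal is correct and is essentially the paper's own argument: both translate solutions by $k$ and pull the coloring back along the corresponding shift, observing that this preserves exactness and rainbowness. (If anything, your version is slightly more careful with the sign of the shift in the induced coloring than the paper's.)
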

	
	\begin{proof} Let $eq = \eq(a_1,a_2,a_3,b)$, $a = a_1+a_2+a_3$, $eq' = \eq(a_1,a_2,a_3,b+ak)$ and $c$ be an exact $r$-coloring of $\Z_{n}$ for $eq$. If $(s_1,s_2,s_3)$ is a solution in $\Z_{n}$ to $eq$, then $a_1s_1+a_2s_2+a_3s_3 + (a_1+a_2+a_3)k = b+ak$ and $(s_1+k,s_2+k,s_3+k)$ is a solution in $\Z_{n}$ to $eq'$.  Define $c_{k}:\Z_{n} \to [r]$ by $c_{k}(x) = c(x+k \bmod n)$.  Thus, $(s_1,s_2,s_3)$ is a rainbow solution to $eq$ with respect to $c$ if and only if $(s_1+k, s_2+k, s_3+k)$ is a rainbow solution to $eq'$ with respect to $c_{k}$.  Since $c_k$ is a translation of the coloring $c$, $\rb(\Z_{n},eq) = \rb(\Z_{n},eq')$.\end{proof}

	\begin{lemma}\label{lem:chat}
		Let $ 2\le t\in\Z$ and $c$ be a rainbow-free coloring of $\Z_{ut}$ for $\eq(a_1,a_2,a_3,b)$ and $a_1a_2a_3 \in \Z_{ut}^\ast$ that does not use color $yellow$. If there exists $j\in \Z_t$ such that for all $i\in\Z_t$, $|P_{i}\backslash P_{j}|\le 1$, then the coloring of $\Z_t$ given by
		$$\hat{c}(i)=\left\{ \begin{array}{ll}
		{\it yellow} & P_{i}\subseteq P_{j}, \\
		P_{i}\setminus P_{j} & \textrm{otherwise},
		\end{array}
		\right.$$
		is well-defined and rainbow-free.
	\end{lemma}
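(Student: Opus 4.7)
Well-definedness of $\hat c$ is immediate from the hypothesis $|P_i\setminus P_j|\le 1$: for each $i\in \Z_t$, either $P_i\subseteq P_j$ (so $P_i\setminus P_j=\emptyset$ and $\hat c(i)$ is the fresh color yellow), or $P_i\not\subseteq P_j$ (so $P_i\setminus P_j$ is a singleton and $\hat c(i)$ equals that unique color). Since $c$ does not use yellow, introducing it creates no clash.

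For rainbow-freeness I plan to argue by contradiction: suppose $(s_1,s_2,s_3)$ is a rainbow solution in $\Z_t$ for $\eq(a_1,a_2,a_3,b)$ under $\hat c$, and lift it to a rainbow solution in $\Z_{ut}$ under $c$. The lifting tool is the analogue of Lemma \ref{obs:pods}: since $a_1a_2a_3\in \Z_{ut}^*$ each $a_\ell$ is invertible modulo both $ut$ and $t$, so for any two chosen representatives $z_\ell\in R_{s_\ell}$ the third element, obtained by solving $a_1z_1+a_2z_2+a_3z_3=b\bmod ut$ for the remaining variable, automatically lies in the correct residue class $R_{s_k}$ (reduction mod $t$ recovers $s_k$). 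By the rainbow condition at most one of the $\hat c(s_\ell)$ equals yellow, so the argument splits into two cases.

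Case 1 is when exactly one of the $\hat c(s_\ell)$ is yellow, say $\hat c(s_1)$; then $P_{s_1}\subseteq P_j$ while $\alpha_2=\hat c(s_2)$ and $\alpha_3=\hat c(s_3)$ are distinct colors outside $P_j$. I pick $z_2\in R_{s_2}$ with $c(z_2)=\alpha_2$ and $z_3\in R_{s_3}$ with $c(z_3)=\alpha_3$, and solve for $z_1\in R_{s_1}$; since $c(z_1)\in P_{s_1}\subseteq P_j$ is automatically disjoint from $\{\alpha_2,\alpha_3\}$, the triple $(z_1,z_2,z_3)$ is rainbow. Case 2 is when no $\hat c(s_\ell)$ is yellow, so each $\alpha_\ell=\hat c(s_\ell)$ is the unique color of $P_{s_\ell}\setminus P_j$ and the $\alpha_\ell$ are pairwise distinct; I pick $z_1,z_2$ realizing $\alpha_1,\alpha_2$ and let $z_3\in R_{s_3}$ be forced.

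The main obstacle is verifying $c(z_3)\notin\{\alpha_1,\alpha_2\}$ in Case 2, and this is exactly where the hypothesis $|P_i\setminus P_j|\le 1$ is essential: it forces $P_{s_3}\subseteq P_j\cup\{\alpha_3\}$, while $\alpha_1,\alpha_2\notin P_j$ and $\alpha_1,\alpha_2\ne\alpha_3$, so $\alpha_1,\alpha_2\notin P_{s_3}$ and therefore $c(z_3)\ne\alpha_1,\alpha_2$. In each case $(z_1,z_2,z_3)$ is a rainbow solution in $\Z_{ut}$, contradicting the rainbow-freeness of $c$, and so $\hat c$ is rainbow-free.
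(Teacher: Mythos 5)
Your proposal is correct and takes essentially the same approach as the paper: well-definedness follows immediately from $|P_i\setminus P_j|\le 1$, and rainbow-freeness is proved by lifting a hypothetical rainbow solution of $\hat c$ to one of $c$, choosing representatives realizing the two non-yellow colors and solving for the remaining coordinate via the invertibility of the $a_\ell$ (the analogue of Lemma~\ref{obs:pods}). Your explicit two-case split (according to whether one of the three $\hat c$-colors is yellow) is just an unpacking of the paper's uniform observation that $P_{s_3}\setminus\{\hat c(s_3)\}\subseteq P_j$ while the two lifted colors lie outside $P_j$.
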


	\begin{proof} Since $|P_{i}\backslash P_{j}|\le 1$, $\hat{c}$ is well-defined. Let $eq=\eq(a_1,a_2,a_3,b)$ and
		assume that $(s_1,s_2,s_3)$ is a rainbow solution of $eq$ in $\Z_t$ with respect to $\hat{c}$. Since $(s_1,s_2,s_3)$ is a rainbow solution, without loss of generality, $\hat{c}(s_1)=red$ and $\hat{c}(s_2)=blue$. Thus, there exist $\alpha\in R_{s_1}, \, \delta\in R_{s_2}$ such that $c(\alpha)=red$ and $c(\delta)=blue$. Therefore, $(\alpha, \delta, \gamma)$  is a solution to $eq$ in $\Z_{ut}$ for some $\gamma\in R_{{s_3}}$. Note that $\hat{c}(s_3)$ is not {\it red} or {\it blue}. However, $P_{{s_3}}\setminus \{\hat{c}(s_3)\}\subseteq P_{j}$. Therefore $c(\gamma)$ is not {\it red} or {\it blue} so $(\alpha, \delta, \gamma)$ is a rainbow solution
		to $eq$ in $\Z_{ut}$ with respect to $c$, a contradiction.
	\end{proof}
	
	\begin{lemma}\label{p0max}
		If $c$ is a rainbow-free coloring of $\Z_{ut}$ for $\eq(a_1,a_2,a_3,b)$, $a_1a_2a_3\in \Z_{ut}^\ast$ and $|P_0| \ge |P_i|$ for $0 \le i \le u-1$, then $|P_i\backslash P_0| \le 1$.
	\end{lemma}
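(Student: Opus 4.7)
I will argue by contradiction: suppose for some $i$, $|P_i\setminus P_0|\ge 2$. Since $|P_0|\ge|P_i|$, we get $|P_0\setminus P_i|\ge|P_i\setminus P_0|\ge 2$, so we can pick distinct colors $c_1,c_2\in P_i\setminus P_0$ and $c_3,c_4\in P_0\setminus P_i$. The plan is to produce a rainbow solution of $\eq(a_1,a_2,a_3,b)$ whose three entries take distinct colors drawn from $\{c_1,c_2,c_3,c_4\}$, contradicting rainbow-freeness of $c$.

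The core construction uses solutions with $s_1\in R_0$ and $s_2\in R_i$. By Lemma~\ref{obs:pods}, every such solution has $s_3$ in a single residue class $R_{j'}$ with $j'=a_3^{-1}(b-a_2i)\bmod u$, and the affine map $\phi(s_1,s_2)=a_3^{-1}(b-a_1s_1-a_2s_2)$ has $t$-element fibers over each $s_3^*\in R_{j'}$ on which projection to either coordinate is a bijection. Rainbow-freeness forces $c(s_3^*)\in\{c(s_1),c(s_2)\}$ whenever $c(s_1)\ne c(s_2)$. Consequently, if some fiber $\phi^{-1}(s_3^*)$ contains both a pair with colors $(c_3,c_1)$ and a pair with colors $(c_4,c_2)$, then $c(s_3^*)\in\{c_3,c_1\}\cap\{c_4,c_2\}=\emptyset$, the desired contradiction. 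Writing $T_\alpha^r=c^{-1}(\alpha)\cap R_r$, this reduces to exhibiting a common element of the sumsets $a_1T_{c_3}^0+a_2T_{c_1}^i$ and $a_1T_{c_4}^0+a_2T_{c_2}^i$ inside $R_{j'}$, i.e., after the identification $R_{j'}\cong\Z_t$, a sumset intersection in $\Z_t$.

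When the collision does not follow directly from pigeonhole using $|T_{c_3}^0|+|T_{c_4}^0|\le t$ and $|T_{c_1}^i|+|T_{c_2}^i|\le t$ (for instance if $j'\in\{0,i\}$ collapses the image or if some individual color class is very small), I will invoke the three companion configurations with $s_1,s_2\in R_i$ (fiber in $R_j$), with $s_1\in R_i$, $s_2\in R_0$ (fiber in $R_{j''}$), and with $s_1,s_2\in R_0$ (fiber in $R_k$), where $j,j'',k$ are the residues determined by Lemma~\ref{obs:pods}. Each yields an analogous covering constraint of the form: for every $\alpha\in P_X,\beta\in P_Y$ with $\alpha\ne\beta$, at least one of $\alpha,\beta$ lies in $P_Z$. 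Combined with the maximality hypothesis $|P_0|\ge|P_Z|$ for every residue $Z$, these constraints should force one of $j',j'',j,k$ to coincide with $0$ or $i$, at which point $c_1$ or $c_2$ is driven into $P_0$ (or $c_3$ or $c_4$ into $P_i$), an immediate contradiction. The hardest step will be executing this case analysis cleanly, as it splits along whether the affine maps $i\mapsto a_3^{-1}(b-a_2i)$ and its cousins fix $i$ or $0$ modulo $u$; the generic case closes by the sumset count sketched above, while each degenerate case requires a tailored follow-up using one of the companion configurations.
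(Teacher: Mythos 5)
Your opening move is the same as the paper's (take $c_1,c_2\in P_i\setminus P_0$ and note $|P_0\setminus P_i|\ge|P_i\setminus P_0|\ge 2$), but the core of your argument has a genuine gap. Your primary mechanism requires a single fiber over some $s_3^*\in R_{j'}$ to contain both a $(c_3,c_1)$-colored pair and a $(c_4,c_2)$-colored pair, i.e.\ a nonempty intersection of two sumsets in $\Z_t$. As you yourself concede, nothing forces this intersection to be nonempty: the two sumsets are merely nonempty subsets of $\Z_t$ and can easily be disjoint. Your fallback is to assert that the companion covering constraints ``should force one of $j',j'',j,k$ to coincide with $0$ or $i$,'' but these residues are fixed affine functions of $a_1,a_2,a_3,b,i$ and in general coincide with neither $0$ nor $i$; no argument is given for why the constraints would force such a coincidence, and the case analysis is not executed. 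So the proof does not close.

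The idea you are missing is that one should first determine the palette of the third residue class rather than hunt for collisions inside it. Fix $j$ with $a_1i+a_2\cdot 0+a_3j=b$. If some $\alpha\in R_j$ had $c(\alpha)\notin P_0$, then choosing $\beta\in R_i$ with $c(\beta)\in\{c_1,c_2\}\setminus\{c(\alpha)\}$ and solving for the unique $\gamma\in R_0$ with $a_1\beta+a_2\gamma+a_3\alpha=b$ already yields a rainbow solution, since $c(\gamma)\in P_0$ while $c(\beta),c(\alpha)\notin P_0$ and $c(\beta)\ne c(\alpha)$. Hence $P_j\subseteq P_0$, and a symmetric single-solution argument gives $P_0\subseteq P_j$, so $P_j=P_0$. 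Now take your two colors $c_3,c_4\in P_0\setminus P_i=P_j\setminus P_i$, place a $c_3$-colored element in $R_0$ and a $c_4$-colored element in $R_j$, and solve for the remaining coordinate, which lands in $R_i$ and therefore avoids both $c_3$ and $c_4$ --- a rainbow solution. This replaces your sumset/counting machinery entirely with two or three explicitly constructed solutions and needs no case split on degenerate residues.
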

	\begin{proof}
		Assume $|P_i \backslash P_0| \ge 2$ for some $1 \le i \le u-1$ and let $red, blue\in P_i \backslash P_0$. Let $j \in \Z_{ut}$ such that $a_1 i + a_2 0 + a_3 j = b$.  Suppose there is an $\alpha\in R_j$ such that $c(\alpha)\notin P_0$.  Choose $\beta\in R_i$ such that $c(\beta) \in\{red,blue\}\backslash \{c(\alpha)\}$.  Now there exists $\gamma\in R_0$ such that $\{\beta,\gamma,\alpha\}$ is a rainbow solution to $\eq(a_1,a_2,a_3,b)$, a contradiction.  Therefore, $P_j\subseteq P_0$.  A similar argument gives that $P_0 \subseteq P_j$, so $P_0 = P_j$.
		
		
		
		
		Since $|P_0|$ is maximum there must exist two colors, both in $P_0$ and $P_j$, that are not in $P_i$. Let $yellow,green \in P_0 \backslash P_i$.  Choosing a \emph{yellow} element in $R_0$ and a \emph{green} element in $R_j$ and solving for the appropriate element in $R_i$ will give a rainbow solution, which is a contradiction. Therefore, $|P_i \backslash P_0| \le 1$ for all $0 \le i\le u-1$.
	\end{proof}

	Using an inductive argument with the following Lemma \ref{zeropalettechatv2}, similar to the argument made in Corollary \ref{cor:lowerboundzn}, and Theorem \ref{thm:23} gives Corollary \ref{cor:upperboundzn}.
	
	\begin{lemma}\label{zeropalettechatv2}
		If $ 2\le t\in\Z$, $3 \le p$ prime, $a_1a_2a_3 \in \Z_{pt}^\ast$, then $$\rb(\Z_{pt},\eq(a_1,a_2,a_3,b)) \le \rb(\Z_p,\eq(a_1,a_2,a_3,b_2)) + \rb(\Z_t,\eq(a_1,a_2,a_3,b_1)) - 2,$$ for some $b_1,b_2 \in \mathbb{Z}$.
	\end{lemma}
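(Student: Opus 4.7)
The plan is to split a rainbow-free coloring of $\Z_{pt}$ into a rainbow-free coloring of $\Z_p$ (via the palette-collapsing construction of Lemma \ref{lem:chat}) and a rainbow-free coloring of $\Z_t$ (obtained by restricting to a single residue class modulo $p$), and then add the two color-count estimates. Let $c$ be an arbitrary exact rainbow-free $r$-coloring of $\Z_{pt}$ for $eq = \eq(a_1,a_2,a_3,b)$; it suffices to prove $r \le \rb(\Z_p,\eq(a_1,a_2,a_3,b_2)) + \rb(\Z_t,\eq(a_1,a_2,a_3,b_1)) - 3$ for some $b_1, b_2 \in \Z$, since then $\rb(\Z_{pt},eq) \le r+1$ gives the stated inequality.

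First, select $j \in \Z_p$ with $|P_j|$ maximum and apply Lemma \ref{colortrans} with shift $k = j$ to obtain an exact rainbow-free $r$-coloring $\tilde c$ of $\Z_{pt}$ for $\eq(a_1,a_2,a_3,b_2)$ where $b_2 = b - j(a_1+a_2+a_3)$; under $\tilde c$ the palette $\tilde P_0$ equals $P_j$ and is therefore maximum. Lemma \ref{p0max} then delivers $|\tilde P_i \setminus \tilde P_0| \le 1$ for every $i \in \Z_p$, and Lemma \ref{lem:chat} produces a well-defined rainbow-free coloring $\hat c$ of $\Z_p$ for $\eq(a_1,a_2,a_3,b_2)$ that uses a fresh \emph{yellow} together with one color from each $\tilde P_i \setminus \tilde P_0$, for a total of $1 + (r - |\tilde P_0|)$ colors. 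Rainbow-freeness of $\hat c$ yields
\[1 + (r - |\tilde P_0|) \le \rb(\Z_p,\eq(a_1,a_2,a_3,b_2)) - 1.\]

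Second, view $\tilde c|_{R_0}$ as a coloring of $\Z_t$ through the bijection $\phi : R_0 \to \Z_t$, $kp \mapsto k$. A triple $(k_1p, k_2p, k_3p) \in R_0^3$ solves $\eq(a_1,a_2,a_3,b_2)$ in $\Z_{pt}$ precisely when $p \mid b_2$ and $(k_1,k_2,k_3)$ solves $\eq(a_1,a_2,a_3,b_1)$ in $\Z_t$ with $b_1 = b_2/p$. For this choice of $b_1$ the restriction is a rainbow-free coloring of $\Z_t$ using $|\tilde P_0|$ colors, so $|\tilde P_0| \le \rb(\Z_t,\eq(a_1,a_2,a_3,b_1)) - 1$. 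Adding the two inequalities yields $r \le \rb(\Z_p,\eq(a_1,a_2,a_3,b_2)) + \rb(\Z_t,\eq(a_1,a_2,a_3,b_1)) - 3$, as required.

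The main obstacle I foresee is reconciling the two halves of the argument when the shift $j$ dictated by the max-palette choice fails to make $p \mid b_2$. If $a_1 + a_2 + a_3 \equiv 0 \bmod p$ then $b_2 \equiv b \bmod p$ for every $j$, so the question reduces to whether $p \mid b$; if $a_1+a_2+a_3 \in \Z_p^{\ast}$ then $p \mid b_2$ pins $j \bmod p$ down uniquely and may conflict with the max-palette selection. Expect the proof to proceed by a case split on $a_1+a_2+a_3 \bmod p$ and $b \bmod p$, exploiting the freedom in $b_1$ and $b_2$ together with the convention $\rb(\Z_t,\cdot) = t+1$ (applied to an induced equation admitting no rainbow solution) to cover the cases in which $R_0$ contains no solutions to $\eq(a_1,a_2,a_3,b_2)$.
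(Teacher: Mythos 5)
Your proposal is essentially the paper's own argument with the roles of the two factors exchanged: the paper takes residue classes modulo $t$, so that the collapsed coloring $\hat{c}$ of Lemma \ref{lem:chat} lives on $\Z_t$ and the maximal-palette class $R_0$ is solomorphic to $\Z_p$, whereas you quotient by $p$ and restrict to $R_0\cong\Z_t$; otherwise the translation via Lemma \ref{colortrans}, the use of Lemmas \ref{p0max} and \ref{lem:chat}, and the final count $r\le\bigl(\rb(\Z_p,\cdot)-1\bigr)+\bigl(\rb(\Z_t,\cdot)-1\bigr)-1$ match the paper exactly. The obstacle you flag at the end --- that the translation making $|\tilde P_0|$ maximal need not make the translated constant divisible by the modulus, so that $R_0$ may carry no solutions and the bound $|\tilde P_0|\le\rb(\Z_t,\eq(a_1,a_2,a_3,b_1))-1$ is not justified --- is genuine, but the paper's proof does not resolve it either: it simply asserts that $(R_0,pt,eq_1)$ is solomorphic to $(\Z_p,p,eq_2)$ without checking the corresponding divisibility condition, so your version is no less complete than the original.
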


	\begin{proof}
		Let $eq = \eq(a_1,a_2,a_3,b)$ and $c$ be a rainbow-free exact $(\rb(\Z_{pt},\eq(a_1,a_2,a_3,b))-1)$-coloring of $\Z_{pt}$. Create the coloring $c_k$ from Lemma \ref{colortrans} to get $|P_0| \ge |P_i|$ for $1\le i \le p-1$, where $P_i$ are defined with respect to coloring $c_k$. This implies that $(\Z_{pt},pt,eq)$ is solomorphic to $(\Z_{pt},pt,eq_1)$ with $eq_1 = \eq(a_1,a_2,a_3,b_1)$ for some $b_1 \in \Z_{pt}$.
		
		Since $c_k$ is rainbow-free and $|P_0| \ge |P_i|$ for all $i$, Lemma \ref{lem:chat} and Lemma \ref{p0max} give a well-defined coloring $\hat{c}$ using $P_0$.  If $\hat{c}$ has a rainbow solution, then $c_k$ has a rainbow solution, so $\hat{c}$ must be rainbow-free.  However, since $\hat{c}$ is coloring $\Z_t$, $\hat{c}$ uses at most $\rb(\Z_t,eq_1) - 1$ colors which contributes at most $\rb(\Z_t,eq_1) -2$ colors to $c$ because, without loss of generality, $yellow$ is not a color from $c$.  Furthermore, $(R_0,pt,eq_1)$ is solomorphic to $(\Z_p,p,eq_2)$ so $|P_0| \le \rb(\Z_p,eq_2) - 1$, where $eq_2 = \eq(a_1,a_2,a_3,b_2)$ for some $b_2 \in \Z_{p}$.  In order for $\hat{c}$ to be rainbow-free, $c_k$ must use at most $\rb(\Z_p,eq_2) + \rb(\Z_t,eq_1) - 3$ colors. This implies $\rb(\Z_{pt},\eq(a_1,a_2,a_3,b)) - 1 \le \rb(\Z_p,eq_2) + \rb(\Z_t,eq_1) - 3$.
	\end{proof}
	
	\begin{cor}\label{cor:eqk}
		If $n = p_1p_2\cdots p_\ell$, $3\leq p_k$ prime for $1\le k \le \ell$, $a_1a_2a_3\in \Z_n^\ast$, and $eq = \eq(a_1,a_2,a_3,b)$, then
		$$\rb(\Z_n,eq)\le 2+ \dsp\sum_{k = 1}^\ell\left[\rb(\Z_{p_k},eq_k) - 2)\right],$$
		where $eq_k=\eq(a_1,a_2,a_3,b_k)$ for some $b_k\in \Z$.
	\end{cor}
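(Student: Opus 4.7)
The plan is to prove this by induction on $\ell$, the number of prime factors of $n$, using Lemma \ref{zeropalettechatv2} as the engine of the inductive step. This mirrors the structure of Corollary \ref{cor:lowerboundzn}, but with the inequality reversed.

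For the base case $\ell = 1$, we have $n = p_1$ and there is nothing to prove: taking $b_1 = b$ (so $eq_1 = eq$) makes both sides equal to $\rb(\Z_{p_1}, eq)$.

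For the inductive step, assume the claim for all products of fewer than $\ell$ primes from the hypothesis, and write $n = p_\ell \cdot t$ with $t = p_1 p_2 \cdots p_{\ell-1}$. Since each $p_k \ge 3$ we have $t \ge 2$ (in fact $t \ge 3$) and since $a_1a_2a_3 \in \Z_n^{\ast}$ divides neither $p_\ell$ nor $t$, we have $a_1a_2a_3 \in \Z_{p_\ell t}^{\ast}$. Apply Lemma \ref{zeropalettechatv2} to obtain
$$\rb(\Z_n, eq) \le \rb(\Z_{p_\ell}, \eq(a_1,a_2,a_3,b'_\ell)) + \rb(\Z_t, \eq(a_1,a_2,a_3,b'_t)) - 2$$
for some $b'_\ell, b'_t \in \Z$. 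Set $eq_\ell := \eq(a_1,a_2,a_3,b'_\ell)$. Since $a_1a_2a_3 \in \Z_t^{\ast}$, the inductive hypothesis applied to $\Z_t$ with the equation $\eq(a_1,a_2,a_3,b'_t)$ yields
$$\rb(\Z_t, \eq(a_1,a_2,a_3,b'_t)) \le 2 + \sum_{k=1}^{\ell-1} \left[\rb(\Z_{p_k}, eq_k) - 2\right]$$
for appropriate equations $eq_k = \eq(a_1,a_2,a_3,b_k)$, $1 \le k \le \ell - 1$. Substituting this bound into the previous inequality gives
$$\rb(\Z_n, eq) \le \bigl[\rb(\Z_{p_\ell}, eq_\ell) - 2\bigr] + 2 + \sum_{k=1}^{\ell-1} \left[\rb(\Z_{p_k}, eq_k) - 2\right] = 2 + \sum_{k=1}^{\ell} \left[\rb(\Z_{p_k}, eq_k) - 2\right],$$
which is what we need.

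There is no real obstacle: Lemma \ref{zeropalettechatv2} already packages the hard combinatorial work, and induction simply iterates it over the prime factors. The only thing to be careful about is book-keeping — the inner $b$-values $b_k$ produced by each application of Lemma \ref{zeropalettechatv2} need not equal the original $b$, but the statement only requires the existence of \emph{some} $b_k$ for each prime, so this is harmless. Verifying that $a_1 a_2 a_3$ remains a unit modulo each intermediate factor is immediate from $a_1a_2a_3 \in \Z_n^{\ast}$ together with the fact that each $p_k$ and $t$ divides $n$.
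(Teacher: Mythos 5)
Your proof is correct and is exactly the argument the paper intends: the paper states (just before Lemma \ref{zeropalettechatv2}) that the corollary follows by an inductive application of that lemma, mirroring the induction in Corollary \ref{cor:lowerboundzn}, which is precisely what you carry out. One trivial wording slip: ``$a_1a_2a_3\in\Z_n^{\ast}$ divides neither $p_\ell$ nor $t$'' should read that $a_1a_2a_3$ is coprime to $n$ and hence to the divisors $p_\ell$ and $t$; the mathematics is unaffected.
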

	
	\begin{cor}\label{cor:eq0}
		Let $n = p_1p_2\cdots p_\ell$, $p_k$ prime for $1\le k \le \ell$, $a_1a_2a_3\in \Z_n^\ast$, where $a_1+a_2+a_3\in \Z_3^{\ast}$ if $3 \mid n$.   Let $eq = \eq(a_1,a_2,a_3,0)$, then
		$$\rb(\Z_n,eq)\le 2+ \dsp\sum_{k = 1}^\ell\left[\rb(\Z_{p_k},eq) - 2)\right].$$
	\end{cor}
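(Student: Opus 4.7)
The plan is to invoke Corollary~\ref{cor:eqk} and then replace each $\rb(\Z_{p_k},eq_k)$ appearing on its right-hand side by $\rb(\Z_{p_k},eq)$. Explicitly, Corollary~\ref{cor:eqk} yields
\[
\rb(\Z_n,eq) \le 2 + \sum_{k=1}^{\ell}\bigl[\rb(\Z_{p_k},eq_k) - 2\bigr]
\]
for some constants $b_k\in\Z$ with $eq_k = \eq(a_1,a_2,a_3,b_k)$, so the task reduces to showing the monotonicity
\[
\rb(\Z_{p_k},\eq(a_1,a_2,a_3,b_k)) \le \rb(\Z_{p_k},\eq(a_1,a_2,a_3,0))
\]
for each prime $p_k\mid n$.

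The easy half is when $a_1+a_2+a_3$ is a unit modulo $p_k$: Lemma~\ref{lem:J} then gives equality of the two rainbow numbers regardless of $b_k$. For the other half, where $a_1+a_2+a_3 \equiv 0 \pmod{p_k}$, I first want to use the hypotheses to narrow down $p_k$. Since each $a_i$ is a unit in $\Z_n$, in particular each $a_i$ is odd, so $a_1+a_2+a_3$ is odd, excluding $p_k=2$; the hypothesis that $a_1+a_2+a_3 \in \Z_3^\ast$ whenever $3\mid n$ rules out $p_k=3$. Thus $p_k\ge 5$, and I can apply the structural results of Section~\ref{sec:zp}. If $a_1=a_2=a_3$ in $\Z_{p_k}$, Theorem~\ref{equalupperbound2} says both rainbow numbers equal $4$. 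Otherwise Theorem~\ref{thm:rbzp} applies: either $|\langle d_1,\ldots,d_6\rangle| = p_k - 1$ and both sides equal $3$, or $|\langle d_1,\ldots,d_6\rangle| < p_k - 1$, in which case the right-hand side is $4$ and trivially dominates $\rb(\Z_{p_k},eq_k) \le 4$.

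The subtle obstacle I anticipate is the hypothesis at the prime $3$: without assuming $a_1+a_2+a_3 \in \Z_3^\ast$, the monotonicity above can fail for $p_k = 3$. Indeed, Proposition~\ref{z3} shows that when $a_1=a_2=a_3$ in $\Z_3$, one has $\rb(\Z_3,\eq(a,a,a,0))=3$ but $\rb(\Z_3,\eq(a,a,a,b))=4$ for $b\neq 0$, which would break the bootstrapping step. The plan therefore crucially relies on the given hypothesis to eliminate this pathological case \emph{before} the monotonicity step is invoked; once that is done, everything else is just case-checking against the previously established $\Z_p$ classifications.
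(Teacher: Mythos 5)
Your proposal is correct and follows essentially the same route as the paper: invoke Corollary~\ref{cor:eqk} and then establish the per-prime monotonicity $\rb(\Z_{p_k},\eq(a_1,a_2,a_3,b_k))\le\rb(\Z_{p_k},\eq(a_1,a_2,a_3,0))$ from the classification results, with the $a_1+a_2+a_3\in\Z_3^\ast$ hypothesis doing exactly the job you identify at the prime $3$. The only cosmetic difference is that you route the unit-sum case through Lemma~\ref{lem:J}, whereas the paper reads the inequality directly off Theorems~\ref{thm:rbzp}, \ref{equalupperbound2}, \ref{thm:23} and Proposition~\ref{z3}.
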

	\begin{proof}  
		By Theorems~\ref{thm:rbzp}, \ref{equalupperbound2}, and \ref{thm:23}, if $p\neq 3$, then $\rb(\Z_p, \eq(a_1,a_2,a_3,b))\leq \rb(\Z_p, \eq(a_1, a_2, a_3, 0))$ for all $b$. If $a_1+a_2+a_3\in \Z_3^{\ast}$, Proposition~\ref{z3} gives $\rb(\Z_3, \eq(a_1,a_2,a_3,b))\leq \rb(\Z_3, \eq(a_1, a_2, a_3, 0))$ for all $b$. The result follows by  Corollary~\ref{cor:eqk}. 
	\end{proof}
	Note that  the assumption $a_1+a_2+a_3\in \Z_{3}^\ast$ is necessary when $3\mid n$. For example,  $\rb(\Z_{3},\eq(1,1,1,0)) = 3$ and $\rb(\Z_{9},\eq(1,1,1,0)) = 5$.  In particular, $\Z_9$ has the rainbow-free coloring $c:\Z_9\rightarrow [4]$ given by $c(2)=2$, $c(5)=3$, $c(8)=4$, and $c(x)=1$ else.
	
	Corollary \ref{cor:eq0} and Lemma~\ref{lem:J} combine to give Corollary \ref{cor:upperboundzn}.
	
	\begin{cor}\label{cor:upperboundzn}
		If $n = p_1^{\alpha_1}p_2^{\alpha_2}\cdots p_\ell^{\alpha_\ell}$, $p_k$ prime for $1\le k \le \ell$, $a_1+a_2+a_3, a_1a_2a_3 \in \Z_n^\ast$ and $eq = \eq(a_1,a_2,a_3,b)$, then
		
		$$\rb(\Z_n,eq)\le    2+\dsp\sum_{k = 1}^\ell\left[\alpha_k(\rb(\Z_{p_k},eq) - 2)\right]. $$
		
	\end{cor}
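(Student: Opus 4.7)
My plan is to combine Corollary \ref{cor:eq0} with Lemma \ref{lem:J}, as the paragraph immediately preceding the corollary suggests. The first step is to use the hypothesis $a_1+a_2+a_3\in\Z_n^{\ast}$ to reduce to the case $b=0$. Since every $p_k$ divides $n$, invertibility modulo $n$ descends to invertibility modulo $p_k$, so Lemma \ref{lem:J} applies simultaneously on $\Z_n$ and on each $\Z_{p_k}$. This yields $\rb(\Z_n,\eq(a_1,a_2,a_3,b))=\rb(\Z_n,\eq(a_1,a_2,a_3,0))$ and $\rb(\Z_{p_k},\eq(a_1,a_2,a_3,b))=\rb(\Z_{p_k},\eq(a_1,a_2,a_3,0))$ for every $k$, so it suffices to prove the claimed inequality when $b=0$.

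Next, I would write the factorization $n=p_1^{\alpha_1}\cdots p_\ell^{\alpha_\ell}$ as a product $n=q_1q_2\cdots q_m$ of $m=\sum_{k=1}^\ell\alpha_k$ primes counted with multiplicity, where each $p_k$ appears exactly $\alpha_k$ times among the $q_i$. The hypothesis $a_1a_2a_3\in\Z_n^{\ast}$ forces $a_1a_2a_3\in\Z_{q_i}^{\ast}$ for every $i$, and $a_1+a_2+a_3\in\Z_n^{\ast}$ guarantees that if $3\mid n$ (equivalently, some $q_i=3$) then $a_1+a_2+a_3\in\Z_3^{\ast}$, which is exactly the side condition needed by Corollary \ref{cor:eq0}. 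Applying that corollary to the product $q_1q_2\cdots q_m$ and then regrouping the $m$ summands by distinct prime gives
\[
\rb(\Z_n,\eq(a_1,a_2,a_3,0))\le 2+\sum_{i=1}^m\bigl(\rb(\Z_{q_i},\eq(a_1,a_2,a_3,0))-2\bigr)=2+\sum_{k=1}^\ell\alpha_k\bigl(\rb(\Z_{p_k},\eq(a_1,a_2,a_3,0))-2\bigr),
\]
which together with the first step gives the claimed bound.

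The main obstacle is a convention issue: one must confirm that Corollary \ref{cor:eq0}'s notation $n=p_1p_2\cdots p_\ell$ is permitted to contain repeated primes. This reading is consistent with its proof, which rests on iterating Lemma \ref{zeropalettechatv2}, and the latter makes no coprimality hypothesis on its $p$ and $t$. If instead Corollary \ref{cor:eq0} is read as requiring distinct primes, the backup plan is a direct induction on $\Omega(n)=\sum_k\alpha_k$: the case $n=2^\alpha$ is handled by Theorem \ref{thm:23}, which gives the matching value $\alpha+2=2+\alpha(\rb(\Z_2,\eq)-2)$; otherwise one peels off a single odd prime $p_j$ via Lemma \ref{zeropalettechatv2}, reapplies Lemma \ref{lem:J} to the two resulting equations (whose new right-hand sides $b_1,b_2$ can be sent back to $0$ since $a_1+a_2+a_3$ is invertible mod both $p_j$ and $n/p_j$), and invokes the inductive hypothesis on $n/p_j$. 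Either route is routine bookkeeping once the reduction to $b=0$ is in place.
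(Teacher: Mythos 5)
Your proposal is correct and matches the paper's (very terse) argument exactly: the paper simply states that Corollary \ref{cor:eq0} and Lemma \ref{lem:J} combine to give the result, which is precisely your reduction to $b=0$ followed by applying Corollary \ref{cor:eq0} to the prime factorization with multiplicity and regrouping. Your observation that Corollary \ref{cor:eq0} must be read as allowing repeated primes (justified since Lemma \ref{zeropalettechatv2} imposes no coprimality on $p$ and $t$) is the right resolution of the only subtle point.
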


	Finally, Corollaries \ref{cor:lowerboundzn}, \ref{thm:lowerboundzn-b} and \ref{cor:upperboundzn} combine to give Theorem \ref{thm:rbzn}.
	
	\begin{thm}\label{thm:rbzn}Let $n = p_1^{\alpha_1} p_2^{\alpha_2}\cdots p_\ell^{\alpha_\ell}$, with $p_k$ prime for $1\le k \le \ell$, and $ a_1a_2a_3\in \Z_n^\ast$.  If one of the following holds:
		\begin{enumerate}
			\item[1)] $b\neq0$ and $a_1+a_2+a_3\in \Z_n^\ast$,
			\item[2)] $b=0$ and $3 \nmid n$, or
			\item[3)] $b=0$, $3\mid n$, and $a_1+a_2+a_3 \in \Z_3^\ast$,
		\end{enumerate}
		
		then
		
		$$\rb(\Z_n,\eq(a_1,a_2,a_3,b))= 2+ \dsp\sum_{k = 1}^\ell\left[\alpha_k(\rb(\Z_{p_k},\eq(a_1,a_2,a_3,b) - 2)\right].$$
	\end{thm}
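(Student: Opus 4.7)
The plan is to verify the equality by matching upper and lower bounds supplied by Corollaries~\ref{cor:lowerboundzn}, \ref{thm:lowerboundzn-b}, and~\ref{cor:upperboundzn}. As a preliminary, I would first observe that under each of the three hypothesis cases, for every prime $p_k$ dividing $n$ one has either $b=0$ or $a_1+a_2+a_3\in \Z_{p_k}^*$: the former is immediate for Cases~2 and~3, while the latter in Case~1 follows from $a_1+a_2+a_3\in \Z_n^*$. Consequently Lemma~\ref{lem:J} yields $\rb(\Z_{p_k},\eq(a_1,a_2,a_3,b)) = \rb(\Z_{p_k},\eq(a_1,a_2,a_3,0))$ at every relevant prime, reconciling the notation of the theorem with the $b=0$ sums appearing in the lower-bound corollaries.

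For the lower bound, I would handle Case~1 ($b\neq 0$ and $a_1+a_2+a_3 \in \Z_n^*$) by a direct application of Corollary~\ref{thm:lowerboundzn-b}, whose hypotheses match those of Case~1 verbatim. For Cases~2 and~3, where $b=0$, Corollary~\ref{cor:lowerboundzn} supplies the same bound, since its sole hypothesis $a_1a_2a_3\in \Z_n^*$ is among the theorem's standing assumptions.

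For the upper bound, Case~1 is immediate from Corollary~\ref{cor:upperboundzn}. For Cases~2 and~3 the same bound is required but with a weaker hypothesis on $a_1+a_2+a_3$; the strategy is to follow the proof of Corollary~\ref{cor:upperboundzn} while invoking Corollary~\ref{cor:eq0} (whose hypothesis at $p=3$ coincides with Case~3's condition) at the squarefree layer, iterating Lemma~\ref{zeropalettechatv2} to peel off one prime factor at a time, and using Theorem~\ref{thm:23} to absorb any $2^\alpha$ factor of $n$. The hard part will be checking, at the prime $3$, that $a_1+a_2+a_3\in \Z_3^*$ suffices in Case~3; this is exactly the content of Corollary~\ref{cor:eq0}, and the example noted immediately after it ($\rb(\Z_3,\eq(1,1,1,0))=3$ but $\rb(\Z_9,\eq(1,1,1,0))=5$, where $a_1+a_2+a_3=3\notin \Z_3^*$) shows the hypothesis cannot be dropped. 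Concatenating the matching bounds in each of the three cases then yields the desired equality.
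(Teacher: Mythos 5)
Your proposal is correct and takes essentially the same approach as the paper, whose entire proof is the single sentence that Corollaries~\ref{cor:lowerboundzn}, \ref{thm:lowerboundzn-b} and \ref{cor:upperboundzn} combine to give the result. Your extra care in routing the upper bound for Cases~2 and~3 through Corollary~\ref{cor:eq0} (since Corollary~\ref{cor:upperboundzn} assumes $a_1+a_2+a_3\in\Z_n^\ast$, which those cases do not guarantee) is in fact more precise than the paper's one-line citation.
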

	
	\section*{Acknowledgements}
	We greatly appreciate the feedback from the referee as it has improved the results and structure of the paper. This work initiated at the 2018 Research Experiences for Undergraduate Faculty Workshop (REUF) hosted at the American Institute of Mathematics (AIM) in San Jose, CA. REUF is a program of the AIM and the Institute for Computational and Experimental Mathematics (ICERM), made possible by the support from the National Science Foundation (NSF) through DMS 1239280. We also thank AIM for supporting our research retreats including funding a week-long meeting in Summer 2019. The last author is also supported by the NSF Award \# 1719841. The second author is supported by a University Research Scholar fellowship from his institution.

\end{document}